\def\bt{\begin{thm}l.}
\def\et{\end{thm}}
\def\bl{\begin{lem}}
\def\el{\end{lem}}
\def\bd{\begin{defn}}
\def\ed{\end{defn}}
\def\bc{\begin{cor}}
\def\ec{\end{cor}}
\def\bp{\begin{proof}}
\def\ep{\end{proof}}
\def\br{\begin{rem}}
\def\er{\end{rem}}
\newtheorem{thm}{Theorem}[section]
\newtheorem{prop}[thm]{Proposition}
\newtheorem{lem}[thm]{Lemma}
\newtheorem{defn}[thm]{Definition}
\newtheorem{rem}[thm]{Remark}
\newtheorem{cor}[thm]{Corollary}
\numberwithin{equation}{section}
\newcommand{\bthm}{\begin{thm}}
\newcommand{\ethm}{\end{thm}}
\newcommand{\bstp}{\begin{stp}}
\newcommand{\estp}{\end{stp}}
\newcommand{\blemma}{\begin{lemma}}
\newcommand{\elemma}{\end{lemma}}
\newcommand{\bprop}{\begin{prop}}
\newcommand{\eprop}{\end{prop}}
\newcommand{\bpf}{\begin{pf}}
\newcommand{\epf}{\end{pf}}
\newcommand{\bdefn}{\begin{defn}}
\newcommand{\edefn}{\end{defn}}
\newcommand{\brk}{\begin{rmrk}}
\newcommand{\erk}{\end{rmrk}}
\newcommand{\bcrl}{\begin{crl}}
\newcommand{\ecrl}{\end{crl}}
\title[]{Equidistribution for Random Polynomials and Systems of Random Holomorphic Sections}
\address{}
\address{Faculty of Engineering and Natural Sciences, Sabanc{\i} University, \.{I}stanbul, Türkiye}
\email{ozangunyuz@alumni.sabanciuniv.edu}
\date{\today}
\keywords{Random polynomials, equidistribution of zeros, variance, random holomorphic sections}
\subjclass[2020]{Primary: 32A60, 60D05 Secondary: 32U40}
\begin{document}

\author{Ozan Günyüz}

\begin{abstract}
This article addresses an equidistribution problem concerning the zeros of systems of random holomorphic sections of positive line bundles on compact K\"{a}hler manifolds and random polynomials on $\mathbb{C}^{m}$ in the setting of the weighted pluripotential theory. For random polynomials, we consider non-orthonormal bases and prove an equidistribution result which is more general than the ones acquired before for non-discrete probability measures. More precisely, our result demonstrates that the equidistribution holds true even when the random coefficients in the basis representation are not independent and identically distributed (i.i.d.), and moreover, they are not constrained to any particular probability distribution. For random holomorphic sections, by extending the concept of a sequence of asymptotically Bernstein-Markov measures introduced by Bayraktar, Bloom and Levenberg in their recent paper to the setting of holomorphic line bundles over compact Kahler manifolds, we derive a global equidistribution, variance estimate and expected distribution theorems related to the zeros of systems of random holomorphic sections for large tensor powers of a fixed holomorphic line bundle for any codimension k, generalizing a previous result of Bayraktar in his 2016 paper and giving also a positive answer to a question posed in the same paper, asking whether the equidistribution is true for non-homogeneous projective manifolds. For both random holomorphic polynomials on $\mathbb{C}^{m}$ and systems of random holomorphic sections, the variance estimation method detailed in another paper of the author with Bojnik is significant.

  \end{abstract}
\maketitle


\begin{quotation}
\center\textit{\small{Dedicated to the memory of Tosun Terzioğlu}}
\end{quotation}

\section{Introduction}\label{s1}

The probabilistic features of the zero sets of random functions with multiple variables captured the interest of mathematicians and physicists alike. This increased focus is especially obvious when dealing with random polynomials with real and complex multivariables. Because of the considerable literature on this subject, we are unable to cite all pertinent references here. As a result, our summary of prior results and the current status of this theory will be brief. For a more comprehensive understanding of both Gaussian and non-Gaussian cases, as well as the progression of polynomial theory, one can turn to works such as \cite{ BL15, BL05, BloomD, ROJ, SHSM, HN08}, among others referenced therein. Prior to the aforementioned advances, it is well-known that Pólya-Bloch, Littlewood-Offord, Kac, Hammersley, and Erdös-Turan lay the groundwork in the study of root distribution of random algebraic equations in a single real variable. For further insights and information, readers can direct their attention to the articles \cite{BlP, Kac43, LO43, HAM56, ET50}. An increasing number of (theoretical) physics papers are dealing with equidistribution and probability-related issues linked to the zeros of complex random polynomials. Foundational studies in this field can be found in the works of \cite{FH, Hann, NV98}.

The broadest framework studied to this point encompasses equidistribution, expected distribution, variance of zero currents of integration of random holomorphic sections, central limit theorem, overcrowding, and hole probability across different probabilistic setups, including Gaussian and non-Gaussian distributions. The foundational and groundbreaking work in this area is attributed to Shiffman and Zelditch (as referenced in \cite{SZ99}). This paper will present equidistribution results using methods developed in the papers \cite{SZ99, Shif, BG}. The main tools employed are the expected distribution and variance estimations of the currents of integration defined via the zero sets of polynomials and holomorphic sections. For similar and other interesting results in diverse probabilistic backgrounds in the general context of holomorphic line bundles over compact K\"{a}hler manifolds, the reader can consult the studies like \cite{BG, BG1, Bay16, BCM, CMM, Shif, SZ08, SZ10}.

The supremum norm $ ||f|| _{D}:=\sup\left\{ \left\vert f\left( z\right) \right\vert :z\in D\right\} $ for a
function $f:D\rightarrow \mathbb{C}$ will be used throughout the paper. We write $d_{n}:=C_{m+n}^{n}=\dim{(\mathcal{P}_{n})}$, where $\mathcal{P}_{n}$ is the vector space of holomorphic polynomials on $\mathbb{C}^{m}$ of degree at most $n$. \thinspace

We say that $F\subset \mathbb{C}^{m}$ is (locally) \textit{pluripolar} if for any $z_{0} \in F$, there is a neighborhood $U$ of $z_{0}$ and a plursiubharmonic function $\psi$ on $U$ such that $F \cap U \subset \{z\in U: \psi(z)=-\infty\}$. Lelong raised the question of whether locally pluripolar sets might be defined by a global plurisubharmonic function, that is, a plurisubharmonic function on $\mathbb{C}^{m}$. These sets are called \textit{globally pluripolar}. By applying techniques from Padé approximation, Josefson solved this problem (\cite{Jos}), so locally and globally pluripolar sets are equivalent. We call sets which are not pluripolar \textit{non-pluripolar}.

The \emph{pluricomplex Green function} of a non-pluripolar compact set $K\subset \mathbb{C}^{m}$ is defined as follows
\begin{equation*}
V_{K}(z):=\sup \{u(z):u|_{K}\leq 0,\ u\in
\mathcal{L}(\mathbb{C}^{m})\},
\end{equation*}%
where \thinspace $\mathcal{L}(\mathbb{C}^{m})$ represents the Lelong class consisting of
all functions\thinspace\ $u$ plurisubharmonic on $\mathbb{C}^{m}$ such that $u(\zeta
)-\ln |\zeta |$ is bounded from above near infinity. The upper semicontinuous regularization of $V_{K}(z)$ is the following $$V^{*}_{K}(z):=\limsup_{\zeta \rightarrow z}{V_{K}(\zeta)}.$$ As is well-known, $V^{*}_{K}(z)\in \mathcal{L}(\mathbb{C}^{m})$
(precisely if $K$ is non-pluripolar, see Corollary 5.2.2 of \cite{Kl}). For more detail about the pluricomplex Green function, we refer the reader to the comprehensive book \cite{Kl}.

A compact set $K$ in $\mathbb{C}^{m}$ is \textit{regular} if $V_{K}\equiv 0$ on $K$ (and therefore $V_{K}$ is continuous on $\mathbb{C}^{m}$). $K$ is said to be \textit{locally regular} if for every $z \in K$, the pluricomplex Green function $V_{K \cap \overline{B(z, r)}}$ is continuous at $z$ for a sufficiently small $r=r(z)>0$. Let $q$ be a weight function, namely, a continuous real-valued function on $K$. Analogous to the unweighted case, the weighted extremal function for the pair $(K, q)$ is defined by $$V_{K, q}(z):=\sup{\{v(z): v\in \mathcal{L}(\mathbb{C}^{m}), v\leq q\,\,\text{on}\,\,K\}}.$$

In a recent paper of Bayraktar, Bloom, and Levenberg (\cite{BBL}), to the best of our knowledge, for the first time, the zero sets of random polynomials in several variables having a representation with respect to a non-orthonormal polynomial basis were investigated. The authors introduced the so-called a sequence of asymptotically and $Z$-asymptotically Chebyshev polynomials building upon the work of Bloom (\cite{Bl01}) on the $\theta$-Chebyshev polynomials, which was developed from the paper of Zakharyuta (\cite{Za}) who defined the directional Chebyshev constants to prove the existence of usual limit for transfinite diameter. The letter $Z$ in front stands in tribute to Zakharyuta and justifies the name $Z$-asymptotically Chebyshev polynomials. They also consider a general basis of polynomials and they provide two probabilistic results in this setting. First, for i.i.d. random coefficients possessing a probability distribution with logarithmically decaying tails and a concentration condition, they prove that the zero currents of integration converges in probability to $dd^c V_{K, q}$, where $K\subset \mathbb{C}^{m}$ is a locally regular compact set and $q$ is a continuous real-valued function on the compact $K$ as the degree $n$ goes to infinity. Second, under the same concentration and i.i.d. assumption on the random coefficients, instead of a probability distribution function, by assuming an expectation hypothesis, it is proven that almost surely the random zero currents of integration converge to $dd^{c} V_{K, q}$  weakly in sense of currents as $p\rightarrow \infty.$ In the unweighted case, these two results are true for a regular compact set $K$ by taking $q\equiv 0$. Recall that when working in the weighted case, for a locally regular compact set $K$, a continuous (real-valued) weight function on it implies the continuity of $V_{K, q}$ on $\mathbb{C}^{m}$, i. e., $V_{K, q}=V^{*}_{K, q}$.

As observed in \cite{Bl01} and \cite{BBL}, for every regular compact set, one can find a sequence of $Z$-asymptotically Chebyshev polynomials. The concept of a sequence of $Z$-asymptotically Chebyshev polynomials is a generalization of many other important polynomial types studied in the literature such as Fekete polynomials associated with an array of Fekete points in a compact set $K \subset \mathbb{C}^{m}$, Leja polynomials associated with a sequence of so-called Leja points in a compact set $K$ and $L^{2}(\mu)$-minimal polynomials for a compact set $K$, where $\mu$ is a Bernstein-Markov measure. For other nice examples, see \cite{BBL}. Inspired by the ideas in \cite{BBL}, for a sequence of $Z$-asymptotically Chebyshev polynomials, the author proved an equidistribution theorem with only a logarithmic moment condition (see Section \ref{s3}) in \cite{Gnyz1} that we will also use in the present paper.

In this study, we establish equidistribution results related to zeros of systems of random \mbox{holomorphic} sections in the setting of high tensor powers of a positive line bundle on a compact Kähler manifold, and zeros of random holomorphic polynomials on $\mathbb{C}^{m}$ under minimal probabilistic assumptions and more general basis constructions by using the weighted pluripotential theory. Specifically, we do not assume any probability distribution for the random coefficients but instead rely on a moment condition (see also \cite{BG}, \cite{BCM}, \cite{Gnyz1}). We extend the concept of asymptotically Bernstein-Markov measures introduced in \cite{BBL} on $\mathbb{C}^{m}$ to the setting of holomorphic line bundles over compact Kähler manifolds. Within this framework, we derive exptected distribution, variance and equidistribution for any codimension, answering positively a question posed in a previous research by \cite{Bay16}, showing that equidistribution results also hold on non-homogeneous projective manifolds. This expands the applicability of such theorems. We follow a unified approach for random systems of holomorphic sections. The present work also examines specific cases, such as the Gaussian and the Fubini-Study measures, verifying that the moment condition holds universally under these distributions. The analysis also includes random polynomials with heavy-tailed and logarithmically decaying distributions investigated in previous papers such as \cite{BL15} and \cite{Bay16}.

\vspace{5mm}

This study is dedicated to the memory of Tosun Terzioğlu, who made significant and \mbox{distinguished} contributions to functional analysis, with profound respect and remembrance.

\section{Some Auxiliary Deterministic Results}\label{s2}

Following \cite{BBL}, we will work in the weighted setting. Let $K \subset \mathbb{C}^{m}$ be a locally regular compact set and $q$ a continuous real-valued weight function on $K$. We take a general basis $\{p_{nj}\}^{d_{n}}_{j=1}$ of $\mathcal{P}_{n}$ with $\|p_{nj}e^{-nq} \|_{K} \leq 1,\,\,j=1, 2, \ldots, d_{n}$ and $n=1, 2, \ldots$. For our purposes, one standing assumption will be that $p_{n1}(z) \equiv 1$ for any $n\in \mathbb{N}$. We consider the Bergman-type functions \begin{equation}\label{bergty} \Gamma_{n}(z):= \sum_{j=1}^{d_{n}}{|p_{nj}(z)|^{2}}.\end{equation}We make a simple observation on $\Gamma_{n}$. To begin with, recall the well-known Siciak-Zakharyuta formulation of the weighted extremal function $V_{K, q}$ (see, for example, the appendix by T. Bloom in \cite{SaffTotik}) \begin{equation}\label{siza}
                    V_{K, q}(z)=\sup{\{\frac{1}{\deg{(p)}}\log{|p(z)|}: p\in \cup_{n}{\mathcal{P}_{n}}, \|pe^{-\deg{(p)}\,q}\|_{K}\leq 1\}}.
                  \end{equation}Now for any polynomial $p_{nj}$ of degree $n$ satisfying $\|p_{nj}e^{-nq} \|_{K} \leq 1$, from (\ref{siza}), we have first \begin{equation*}
                                                                           |p_{nj}(z)| \leq e^{n V_{K,q}(z)}.
                                                                         \end{equation*}This gives
\begin{equation*}
\Gamma_{n}(z)=\sum_{j=1}^{d_{n}}{|p_{nj}(z)|^{2}} \leq d_{n} e^{2n V_{K,q}(z)},
\end{equation*}which concludes, when passing to the logarithm, the following

\begin{equation*}\label{siza2}
  \frac{1}{n}\log{\Gamma_{n}(z)} \leq \frac{1}{n}\log{d_{n}} + 2 V_{K, q}(z).
\end{equation*}Taking the upper limit as $n \rightarrow \infty$, we get \begin{equation}\label{siza3}
                                                                         \limsup_{n\rightarrow \infty}{\frac{1}{2n}\log{\Gamma_{n}(z)}} \leq V_{K, q}(z) \,\,\text{for}\,\,\text{all}\,\,z\in \mathbb{C}^{m}.
                                                                       \end{equation}
It follows from (\ref{siza3}) and the continuity of the extremal function $V_{K, q}$ that \begin{equation}\label{siza4}
                                     [\limsup_{n\rightarrow \infty}{\frac{1}{2n}\log{\Gamma_{n}(z)}}]^{*} \leq V_{K, q}(z).
                                   \end{equation}Since $V_{K, q}$ is continuous, the estimation (\ref{siza4}) yields that $\{\frac{1}{2n}\log{\Gamma_{n}}\}$ is locally uniformly bounded.

The ensuing lemma concerning subharmonic functions on $\mathbb{R}^{m}$ from \cite{BBL} will be used to prove the $L^{1}_{loc}(\mathbb{C}^{m})$-convergence for Bergman-type functions given above.

\begin{lem}\label{hörm}
Given a sequence $\{v_{n}\}_{n=1}^{\infty}$ of subharmonic functions on a domain $G\subset \mathbb{R}^{m},\,\,m \geq 2$, which are locally uniformly bounded above on $G$, and a continuous and subharmonic function $v$ on $G$, assume that the following two conditions hold \begin{itemize}
 \item[(i)] $[\limsup_{n\rightarrow \infty}{v_{n}(z)}]^{*} \leq v(z)$ for all $z\in G$.
  \item[(ii)] There is a countable dense set $\{z_{s}\}$ in $G$ such that \begin{equation*}
   \lim_{n\rightarrow \infty}{v_{n}(z_{s})}=v(z_{s}),\,\,s=1, 2, \ldots.
   \end{equation*}
   \end{itemize} Then $v_{n} \rightarrow v$ in $L^{1}_{loc}(G)$.
\end{lem}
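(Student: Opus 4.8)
The plan is to combine the standard $L^{1}_{loc}$-compactness theory for subharmonic functions with a subsequence argument, using hypothesis (i) to control any limit from above and hypothesis (ii), together with the sub-mean value inequality, to pin the limit down from below.

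First I would invoke the classical compactness dichotomy (as in Hörmander's treatment, which is presumably the intended reference): a sequence of subharmonic functions that is locally uniformly bounded above on $G$ either tends to $-\infty$ locally uniformly, or admits a subsequence converging in $L^{1}_{loc}(G)$ to a subharmonic function. Condition (ii) keeps the values $v_{n}(z_{s})$ bounded, since they converge to the finite numbers $v(z_{s})$, so the $-\infty$ alternative is impossible and $\{v_{n}\}$ is precompact in $L^{1}_{loc}(G)$. By the usual subsequence principle it then suffices to prove that every $L^{1}_{loc}$-convergent subsequence has limit $v$; so I pass to such a subsequence $v_{n_{k}}\to w$ with $w$ subharmonic, and the task reduces to showing $w=v$.

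For the upper bound I would apply Hartogs' lemma. Since $f\le f^{*}$ always holds, condition (i) gives $\limsup_{n}v_{n}\le v$ pointwise, and as $v$ is continuous, Hartogs' lemma yields, on every compact $K\subset G$ and for every $\varepsilon>0$, the estimate $v_{n_{k}}\le v+\varepsilon$ on $K$ for all large $k$; passing to the $L^{1}$-limit gives $w\le v+\varepsilon$ almost everywhere, hence $w\le v$ everywhere for the subharmonic representative. For the reverse inequality at the dense points I would use the sub-mean value property $v_{n_{k}}(z)\le \frac{1}{|B(z,r)|}\int_{B(z,r)}v_{n_{k}}\,dV$: fixing $r$ with $\overline{B(z,r)}\subset G$ and letting $k\to\infty$ (using $L^{1}_{loc}$ convergence) gives $\limsup_{k}v_{n_{k}}(z)\le \frac{1}{|B(z,r)|}\int_{B(z,r)}w\,dV$, and since the solid averages of the subharmonic function $w$ decrease to $w(z)$ as $r\to 0$, I obtain $\limsup_{k}v_{n_{k}}(z)\le w(z)$ for all $z$. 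Evaluating at $z=z_{s}$ and combining with (ii), i.e. $v_{n_{k}}(z_{s})\to v(z_{s})$, yields $v(z_{s})\le w(z_{s})$, so $w=v$ on the dense set $\{z_{s}\}$.

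Finally I would upgrade equality on a dense set to equality everywhere. The function $v-w$ is nonnegative by the upper bound, vanishes on $\{z_{s}\}$, and is lower semicontinuous as the sum of the continuous $v$ and the lower semicontinuous $-w$; approaching an arbitrary point $z$ by elements of $\{z_{s}\}$ then forces $(v-w)(z)\le 0$, so $w=v$ identically. Hence every convergent subsequence has limit $v$, and therefore $v_{n}\to v$ in $L^{1}_{loc}(G)$. The only genuinely delicate inputs are the compactness dichotomy and Hartogs' lemma for subharmonic functions; I expect the main point requiring care to be the correct invocation of the compactness theorem, specifically using (ii) to rule out divergence to $-\infty$, after which the mean value inequality and semicontinuity arguments are routine.
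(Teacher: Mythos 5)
Your proof is correct, and since the paper offers no proof of this lemma at all (it is imported verbatim from \cite{BBL}), the relevant comparison is with the standard argument there, which yours matches: rule out locally uniform divergence to $-\infty$ via (ii), extract $L^{1}_{loc}$-convergent subsequences by the compactness theorem for subharmonic functions, bound the limit above by $v$ using (i) and Hartogs' lemma, bound it below at the dense points by the sub-mean value inequality, and propagate equality by semicontinuity. The only point deserving an explicit word is that the compactness dichotomy must be applied to each subsequence (with (ii) again excluding the $-\infty$ alternative) to get genuine precompactness of the full sequence, which you implicitly do.
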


In the unweighted case, it is proven in [\cite{BBL}, Proposition 2.3]  that, by making use of a Zakharyuta-Siciak type theorem of Bloom (\cite{Bl01}, Theorem 4.2) for $Z$-asymptotically Chebyshev sequences for compact sets in $\mathbb{C}^{m}$ and a diagonalization argument, when we are given a subsequence $L$ of $\mathbb{N}$, it is possible to find another subsequence $L' \subset L$ and a countable dense subset of points $\{z_{s}\}\,(s=1, 2, \ldots) $\,in\, $\mathbb{C}^{m}$ such that the following holds $$\lim_{n\rightarrow \infty,\,n\in L'}{\frac{1}{2n}\log{\Gamma_{n}(z_{s})}}=V_{K}(z_{s}), \,s=1, 2, \ldots$$Thus, combining this last information with Lemma \ref{hörm}, the authors prove the $L^{1}_{loc}(\mathbb{C}^{m})$-convergence of $\{\frac{1}{2n}\log{\Gamma_{n}}\}$ to $V_{K}$ for sequences of $Z$-asymptotically Chebyshev polynomials (Corollary 2.6 there). For a general polynomial basis, we lack such an information as above, however if we assume the existence of a countable dense subset just like the authors did in \cite{BBL}, we have the following proposition for the sequence $\{\frac{1}{2n}\log{\Gamma_{n}}\}$.

\begin{prop}\label{Locc1}
Let $\Gamma_{n}(z)$ be as in (\ref{bergty}). Assume that there is a countable dense set $\{z_{s}\},\,\,s=1, 2, \ldots,$ in $\mathbb{C}^{m}$ such that
\begin{equation*}
 \lim_{n\rightarrow \infty}{\frac{1}{2n}\log{\Gamma_{n}(z_{s})}}=V_{K, q}(z_{s}), \,s=1, 2, \ldots.
\end{equation*} Then $\frac{1}{2n}\log{\Gamma_{n}} \rightarrow V_{K, q}$ in $L^{1}_{loc}(\mathbb{C}^{m})$.
\end{prop}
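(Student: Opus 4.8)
The plan is to apply Lemma~\ref{hörm} directly to the sequence $v_n := \frac{1}{2n}\log\Gamma_n$ on the domain $G = \mathbb{C}^m$ (viewed as $\mathbb{R}^{2m}$), with limit function $v := V_{K,q}$. The work of the excerpt has already assembled most of the hypotheses, so the proof is largely a matter of verifying that each of the two conditions of the lemma holds and that the preliminary structural assumptions are met.

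\medskip

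First I would check the standing requirements of Lemma~\ref{hörm}. Each $v_n = \frac{1}{2n}\log\Gamma_n$ is plurisubharmonic (hence subharmonic as a function on $\mathbb{R}^{2m}$) because $\Gamma_n = \sum_j |p_{nj}|^2$ is a finite sum of squared moduli of holomorphic polynomials, so $\log\Gamma_n$ is plurisubharmonic and a positive multiple of it remains so. The local uniform boundedness \emph{above} of the family $\{v_n\}$ is exactly the conclusion drawn from estimate~(\ref{siza4}) together with the continuity of $V_{K,q}$, which was noted at the end of the preceding discussion. The limit $v = V_{K,q}$ is continuous (since $K$ is locally regular and $q$ is continuous, as recalled in the excerpt) and plurisubharmonic, hence subharmonic. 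Thus the global hypotheses of the lemma are in place.

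\medskip

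Next I would verify the two enumerated conditions. Condition~(i), namely $[\limsup_{n\to\infty} v_n(z)]^* \le v(z)$ for all $z$, is precisely inequality~(\ref{siza4}), established earlier from the Siciak-Zakharyuta formula~(\ref{siza}). Condition~(ii) is exactly the hypothesis of the proposition: the assumed countable dense set $\{z_s\}$ in $\mathbb{C}^m$ satisfies $\lim_{n\to\infty} v_n(z_s) = V_{K,q}(z_s) = v(z_s)$ for every $s$. With both conditions confirmed, Lemma~\ref{hörm} yields $v_n \to v$ in $L^1_{loc}(\mathbb{C}^m)$, which is the desired conclusion $\frac{1}{2n}\log\Gamma_n \to V_{K,q}$ in $L^1_{loc}(\mathbb{C}^m)$.

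\medskip

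I expect no serious obstacle here, since the proposition is essentially a clean application of the black-box lemma once the deterministic groundwork (the upper envelope estimate and the continuity of $V_{K,q}$) has been laid. The only point demanding a little care is the dimensional bookkeeping: Lemma~\ref{hörm} is stated for subharmonic functions on a domain in $\mathbb{R}^m$ with $m \ge 2$, whereas here we apply it with the underlying real dimension $2m$, so I would make explicit that we identify $\mathbb{C}^m$ with $\mathbb{R}^{2m}$ and that plurisubharmonicity implies the subharmonicity required by the lemma. Beyond this identification, the argument is a direct invocation, and the substantive content lies entirely in the hypothesis supplying the dense set where pointwise convergence holds.
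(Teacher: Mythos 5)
Your proposal is correct and follows exactly the paper's own argument: the paper's proof is a one-line invocation of Lemma~\ref{hörm} using the upper bound~(\ref{siza4}) and the assumed pointwise convergence on the countable dense set, which is precisely what you carry out (with more explicit verification of the hypotheses).
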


\begin{proof}
This follows from the relation (\ref{siza4}), the assumption and Lemma \ref{hörm}.
\end{proof}

\section{Probabilistic Model}\label{s3}

The framework for our forthcoming discussions is presented as follows: We base our approach on papers such as \cite{BCM}  (also refer to \cite{BL15}) to explain our method of randomizing the space $\mathcal{P}_{n}$. Suppose that $K\subset \mathbb{C}^{m}$ is a locally regular compact set and $q$ is a continuous real-valued weight function on $K$. Let us take $\{p_{nj}\}_{j=1}^{d_{n}}$ as a general basis for $\mathcal{P}_{n}$ with a condition $||p_{nj} e^{-nq}||_{K} \leq 1$ (For the unweighted case, we take the compact set $K$ to be regular and $||p_{nj}||_{K} \leq 1$ ). Consequently, for each polynomial $f_{n} \in \mathcal{P}_{n}$ of degree $n$, it can be written as \begin{equation}\label{repch}f_{n}(z)=\sum_{j=1}^{d_{n}}{a_{j}^{(n)} p_{nj}(z)}:=\langle a^{(n)}, p^{(n)}(z) \rangle \in \mathcal{P}_{n},\end{equation} where $a^{(n)}:=(a^{(n)}_{1}, \ldots, a^{(n)}_{d_{n}})\in \mathbb{C}^{d_{n}}$ and $p^{(n)}(z):=(p_{n1}(z), \ldots, p_{nd_{n}}(z)) \in \mathcal{P}_{n}^{d_{n}}$. We identify the space $\mathcal{P}_{n}$ with $\mathbb{C}^{d_{n}}$ and equip it with a probability measure $\mu_{n}$ that puts no mass on pluripolar sets and meets the moment condition stated below:

There exist a constant $\alpha\geq 2$ and for every $n\geq 1$ constants $D_{n}=o(n^{\alpha})>0$ such that
\begin{equation} \label{moment} \int_{\mathbb{C}^{d_{n}}}{\bigl\lvert\log{|\langle a, v \rangle|\bigr\rvert^{\alpha}}d\mu_{n}(a)}\leq D_{n}\end{equation}for every $v\in \mathbb{C}^{d_{n}}$ \, with\, $\| v \|=1$. Hence $(\mathcal{P}_{n}, \mu_{n})$ is the probability space comprising the random polynomials. We also consider the infinite product probability measure $\mu_{\infty}$ induced by $\mu_{n}$, that is $\mu_{\infty}= \prod_{n=1}^{\infty}{\mu_{n}}$ on the product space $\prod_{n=1}^{\infty}{\mathcal{P}_{n}}$: \begin{equation*}
                                                      (\mathcal{P}_{\infty}, \mathbf{\mu}_{\infty})=(\prod_{n=1}^{\infty}{\mathcal{P}_{n}}, \prod_{n=1}^{\infty}{\mu_{n}}).
                                                    \end{equation*} These probability spaces varying with the degree $n$ depend on the choice of basis, however the equidistribution of zeros of polynomials will be independent of the basis chosen, as Theorem \ref{equidc1} corroborates.

Throughout this paper, we shall be working with real-valued test forms of bidegree $(p, q)$ on $\mathbb{C}^{m}$ and the space of these forms will be denoted by $\mathcal{D}^{p, q}_{\mathbb{R}}(\mathbb{C}^{m})$. Let $f\in \mathcal{P}_{n}$, we use the symbol $Z_{f}$ for the zero set of $f$, i.e., $Z_{f}:=\{z\in \mathbb{C}^{m}: f(z)=0\}$. We then consider the random current of integration over $Z_{f}$, in symbols $[Z_{f}]$, defined as follows: Given a test form \,$\varphi\in \mathcal{D}^{m-1, m-1}_{\mathbb{R}}(\mathbb{C}^{m})$  $$ \langle [Z_{f}], \varphi \rangle := \int_{\mathrm{Reg}Z_{f}}{\varphi},$$where $\mathrm{Reg}Z_{f}$ is the set of regular points of $Z_{f}$. By a result of Lelong (see, e.g., \cite{Dem12}, Chapter 3, Theorem 2.7), $[Z_{f}]$ is a closed positive $(1, 1)$-current on $\mathbb{C}^{m}$.
The expectation and the  variance of the random current of integration $[Z_{f}]$ are given by \begin{align}\label{expvari}
  \mathbb{E} \langle [Z_{f}], \varphi \rangle&:=\int_{\mathcal{P}_{n}}{\langle [Z_{f}], \varphi\rangle\, d\mu_{n}(f)} \nonumber \\  \mathrm{Var} \langle [Z_{f}], \, \varphi \rangle &:= \mathbb{E}\langle [Z_{f}], \, \varphi \rangle^{2}- (\mathbb{E}\langle [Z_{f}], \, \varphi \rangle)^{2},\end{align} where $\varphi\in \mathcal{D}^{m-1, m-1}_{\mathbb{R}}(\mathbb{C}^{m})$ and $\mu_{n}$ is the probability measure on $\mathcal{P}_{n}$ that arises from the identification of $\mathcal{P}_{n}$ with $\mathbb{C}^{d_{n}}$. As will be shown in Section 4, variance and expectation are well-defined quantities and expectation can be viewed as a positive closed current.

Notice that the moment condition (\ref{moment}) differs somewhat from the one presented in \cite{BCM} (p.3, assumption ($B$)). This modification is made to assure for the variance of a random current of integration along the zero set of a polynomial to be well-defined (see section 4 for details).

We often utilize the well-known Poincar\'{e}-Lelong formula \begin{equation}\label{pole}
                                                            [Z_{f}]= dd^c \log{|f|}.
                                                          \end{equation}
Throughout the paper, we use the normalized form $dd^{c}=\frac{i}{\pi} \partial \overline{\partial}$ and also take into consideration the random currents of integration by normalizing them with the degree of the polynomial, that is, given that $f_{n}\in \mathcal{P}_{n}$ of degree $n$ as in (\ref{repch}), $[\widehat{Z_{f_{n}}}]:=\frac{1}{n}[Z_{f_{n}}]$.

We would like to point out that for the sake of simplicity, we only use test forms on $\mathbb{C}^{m}$ in this paper. However, our findings are also valid for continuous forms on $\mathbb{C}^{m}$ with compact support because of the density of test forms in compactly supported continuous forms.

\section{Equidistribution in $1$-codimensional Case}\label{s4}

In this section, we will begin by examining the expected distribution of zeros of a polynomial of degree $n$ as expressed in (\ref{repch}). Then, we will deal with the variance estimate of the random current of integration $[\widehat{Z_{f_{n}}}]$, and finally we will prove the equidistribution theorem for these random currents. The underlying ideas that we employed in the proofs draw inspiration from the works of \cite{SZ99} and \cite{BG}. After each of these three results in the weighted setting, we also provide their unweighted counterparts with a regular compact set as corollaries whose proofs are verbatim to that of the weighted versions.
\subsection{Expected Distribution of Zeros}

\begin{lem}\label{expw}
Let $K\subset \mathbb{C}^{m}$ be a locally regular compact subset, a continuous real-valued weight function $q$ on $K$ be given. Suppose that there is a countable dense set $\{z_{s}\}$ in $\mathbb{C}^{m}$ such that  \begin{equation}\label{count}
                                                                                \lim_{n\rightarrow \infty}{\frac{1}{2n}\log{\Gamma_{n}(z_{s})}}=V_{K, q}(z_{s}) \,\,\,\,s=1, 2, \ldots.
                                                                              \end{equation}  Then for a random polynomial $f_{n}(z)=\sum_{j=1}^{d_{n}}{a_{j}^{(n)} p_{nj}(z)}$, we have
\begin{equation}\label{expd}
 \mathbb{E}[\widehat{Z_{f_{n}}}] \longrightarrow dd^c V_{K, q}
\end{equation} in the weak* topology of currents as $n \rightarrow \infty$.
\end{lem}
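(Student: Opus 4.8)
The plan is to reduce the statement to an $L^{1}_{loc}$-convergence result for the averaged potentials and then apply $dd^{c}$. By the Poincaré–Lelong formula (\ref{pole}) we have $[\widehat{Z_{f_{n}}}]=\frac{1}{n}dd^{c}\log|f_{n}|$, so the first task is to show that $dd^{c}$ commutes with the expectation, i.e. that $\mathbb{E}[\widehat{Z_{f_{n}}}]=dd^{c}\bigl(\frac{1}{n}\mathbb{E}\log|f_{n}|\bigr)$. For a fixed test form $\varphi\in\mathcal{D}^{m-1,m-1}_{\mathbb{R}}(\mathbb{C}^{m})$ one writes $\langle[Z_{f_{n}}],\varphi\rangle=\int_{\mathbb{C}^{m}}\log|f_{n}|\,dd^{c}\varphi$ and takes the expectation; the interchange of $\mathbb{E}$ with the integral over $\operatorname{supp}\varphi$ is the point that requires care, and I would justify it by Fubini's theorem after establishing a uniform integrable bound as in the next step.

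Next I would introduce the decomposition on which everything rests. Since the standing assumption $p_{n1}\equiv1$ forces $\Gamma_{n}(z)\ge1$, hence $p^{(n)}(z)\neq0$ for every $z$, the vector $v(z):=p^{(n)}(z)/\Gamma_{n}(z)^{1/2}$ is a well-defined unit vector and $f_{n}(z)=\Gamma_{n}(z)^{1/2}\langle a^{(n)},v(z)\rangle$. Taking logarithms gives $\log|f_{n}(z)|=\tfrac12\log\Gamma_{n}(z)+\log|\langle a^{(n)},v(z)\rangle|$. The first summand is deterministic; after normalization, $\frac{1}{2n}\log\Gamma_{n}\to V_{K,q}$ in $L^{1}_{loc}(\mathbb{C}^{m})$ by Proposition \ref{Locc1}, using precisely the countable dense set hypothesis (\ref{count}). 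For the random summand, the moment condition (\ref{moment}) applied to the unit vector $v(z)$ together with Jensen's (or Hölder's) inequality yields $\bigl|\mathbb{E}\log|\langle a^{(n)},v(z)\rangle|\bigr|\le\bigl(\mathbb{E}|\log|\langle a^{(n)},v(z)\rangle||^{\alpha}\bigr)^{1/\alpha}\le D_{n}^{1/\alpha}$, and since $D_{n}=o(n^{\alpha})$ we get $\frac{1}{n}D_{n}^{1/\alpha}\to0$. This bound is uniform in $z$, so $\frac{1}{n}\mathbb{E}\log|\langle a^{(n)},v(\cdot)\rangle|\to0$ uniformly and in particular in $L^{1}_{loc}$.

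Combining the two pieces gives $\frac{1}{n}\mathbb{E}\log|f_{n}|\to V_{K,q}$ in $L^{1}_{loc}(\mathbb{C}^{m})$. Feeding this back through the interchange from the first step and using that $dd^{c}$ is continuous for the $L^{1}_{loc}$-to-weak\textsuperscript{*} passage — namely $\langle dd^{c}u_{n},\varphi\rangle=\langle u_{n},dd^{c}\varphi\rangle\to\langle u,dd^{c}\varphi\rangle$ whenever $u_{n}\to u$ in $L^{1}_{loc}$ and $\varphi$ is a fixed test form — we conclude $\mathbb{E}[\widehat{Z_{f_{n}}}]\to dd^{c}V_{K,q}$ weakly. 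I expect the main obstacle to be not the convergence itself, which is soft once the decomposition is in place, but the rigorous justification of the Fubini interchange: one must check that $\mathbb{E}\log|f_{n}|$ is a well-defined locally integrable function and that $\int_{\operatorname{supp}\varphi}\int_{\mathbb{C}^{d_{n}}}|\log|f_{n}||\,d\mu_{n}\,|dd^{c}\varphi|<\infty$. Both follow from the same two inputs used above, the local uniform boundedness of $\frac{1}{2n}\log\Gamma_{n}$ recorded after (\ref{siza4}) and the uniform moment bound $D_{n}^{1/\alpha}$, which dominate the two summands on the compact set $\operatorname{supp}\varphi$.
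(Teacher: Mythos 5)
Your proposal is correct and follows essentially the same route as the paper's proof: the same decomposition $\log|f_{n}|=\tfrac12\log\Gamma_{n}+\log|\langle a^{(n)},\beta^{(n)}\rangle|$ with the unit vector $\beta^{(n)}(z)=p^{(n)}(z)/\Gamma_{n}(z)^{1/2}$, the same Hölder/moment bound $D_{n}^{1/\alpha}/n\to0$ for the random term, and the same appeal to the countable-dense-set hypothesis via Lemma \ref{hörm} (equivalently Proposition \ref{Locc1}) for the deterministic term, combined through Fubini--Tonelli. Your extra attention to justifying the interchange of expectation and integration is welcome but uses exactly the two inputs the paper itself invokes.
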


\begin{proof}
We start with writing the following unit vectors in $\mathbb{C}^{d_{n}}$ by (\ref{repch}), \begin{equation}\label{makeu}
\beta^{(n)}(z):=\frac{1}{\sqrt{\Gamma_{n}(z)}}p^{(n)}(z)=\big(\frac{p_{n1}(z)}{\sqrt{\Gamma_{n}(z)}}, \ldots, \frac{p_{nd_{n}}(z)}{\sqrt{\Gamma_{n}(z)}}\big).
\end{equation}Now we observe that \begin{equation}\label{decomp}
\frac{1}{n}\log{|f_{n}(z)|}=\frac{1}{n}\log{|\langle a^{(n)}, \beta^{(n)}(z) \rangle|} + \frac{1}{2n}\log{\Gamma_{n}(z)},
\end{equation} where, as before, $a^{(n)}=(a^{(n)}_{1}, \ldots, a^{(n)}_{d_{n}})\in \mathbb{C}^{d_{n}}$ and $p^{(n)}(z):=(p_{n1}(z), \ldots, p_{nd_{n}}(z))$.
Pick $\varphi \in \mathcal{D}^{m-1, m-1}_{\mathbb{R}}(\mathbb{C}^{m})$. Due to the definition of expectation, the Poincar\'{e}-Lelong formula (\ref{pole}), the identification of $\mathcal{P}_{n}$ and the Fubini-Tonelli's theorem, \begin{equation}\label{expd2}
  \mathbb{E}\langle [\widehat{Z_{f_{n}}}], \varphi \rangle= \int_{\mathbb{C}^{d_{n}}}{\langle\frac{1}{2n}dd^{c} \log{\Gamma_{n}}, \varphi \rangle d\mu_{n}(a^{(n)})} + \frac{1}{n} \int_{\mathbb{C}^{m}}{\int_{\mathcal{P}_{n}}{\log{|\langle a^{(n)}, \beta^{(n)}(z)\rangle|}}d\mu_{n}(f_{n}) dd^{c}\varphi(z)}. \end{equation}By the moment condition (\ref{moment}) and the H\"{o}lder's inequality, the second term in (\ref{expd2}) does have a bound from above,
  \begin{equation}\label{secondterm}
 \frac{1}{n} \int_{\mathbb{C}^{m}}{\int_{\mathbb{C}^{d_{n}}}{\big| \log{|\langle a^{(n)}, \beta^{(n)}(z)\rangle|} \big|}d\mu_{n}(a^{(n)}) dd^{c}\varphi(z)} \leq \frac{D_{n}^{1/\alpha}}{n} C_{\varphi}, \end{equation} where $C_{\varphi}$ is a finite constant depending on the form $\varphi$, which has a compact support in $\mathbb{C}^{m}$. In more concrete terms, $C_{\varphi}$ can be thought of as the sum of the supremum norms of the coefficients of the form $dd^{c}\varphi$. By the relation (\ref{siza4}) and the assumption of countable dense subset, Lemma \ref{hörm} gives that $\frac{1}{2n}\log{\Gamma_{n}} \rightarrow V_{K, q}$ in $L^{1}_{loc}(\mathbb{C}^{m})$. All in all, when $n \rightarrow \infty$, the second term goes to zero owing to the inequality (\ref{secondterm}), and the first term converges to $dd^{c}V_{K, q}$ in the weak* topology by Lemma \ref{hörm}, which ends the proof.\end{proof}

\begin{cor}
Let $K\subset \mathbb{C}^{m}$ be a regular compact subset. Suppose that there is a countable dense set $\{z_{s}\}$ in $\mathbb{C}^{m}$ such that  \begin{equation}\label{count}
                                                                                \lim_{n\rightarrow \infty}{\frac{1}{2n}\log{\Gamma_{n}(z_{s})}}=V_{K}(z_{s}) \,\,\,\,s=1, 2, \ldots.
                                                                              \end{equation}  Then for a random polynomial $f_{n}(z)=\sum_{j=1}^{d_{n}}{a_{j}^{(n)} p_{nj}(z)}$ with $||p_{nj}||_{K} \leq 1$, we have
\begin{equation}\label{expd}
 \mathbb{E}[\widehat{Z_{f_{n}}}] \longrightarrow dd^c V_{K}
\end{equation} in the weak* topology of currents as the degree $n \rightarrow \infty$.
\end{cor}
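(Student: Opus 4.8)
The plan is to specialize the argument of Lemma \ref{expw} to the weight $q\equiv 0$, checking that every ingredient used there survives in the unweighted regular setting. First I would record the three elementary facts that make the specialization legitimate: for $q\equiv 0$ one has $V_{K,0}=V_K$; the normalization $\|p_{nj}e^{-nq}\|_K\le 1$ collapses to the stated hypothesis $\|p_{nj}\|_K\le 1$; and regularity of $K$ guarantees that $V_K$ is continuous on $\mathbb{C}^m$ (by the definition recalled in Section \ref{s2}), which is precisely the continuity property that drives the convergence machinery. Consequently the Siciak--Zakharyuta identity (\ref{siza}), the upper bound (\ref{siza4}) for $[\limsup_n \frac{1}{2n}\log\Gamma_n]^*$, and Proposition \ref{Locc1} all hold verbatim with $V_K$ in place of $V_{K,q}$.

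I would then reproduce the same two-term decomposition. Introducing the unit vectors $\beta^{(n)}(z)=\Gamma_n(z)^{-1/2}p^{(n)}(z)$ as in (\ref{makeu}) and writing, exactly as in (\ref{decomp}),
$$\frac{1}{n}\log|f_n(z)|=\frac{1}{n}\log|\langle a^{(n)},\beta^{(n)}(z)\rangle|+\frac{1}{2n}\log\Gamma_n(z),$$
I apply the Poincar\'e--Lelong formula (\ref{pole}), the identification of $\mathcal{P}_n$ with $\mathbb{C}^{d_n}$, and Fubini--Tonelli to $\mathbb{E}\langle[\widehat{Z_{f_n}}],\varphi\rangle$ for a test form $\varphi\in\mathcal{D}^{m-1,m-1}_{\mathbb{R}}(\mathbb{C}^m)$. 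This splits the expectation into a deterministic term built from $\frac{1}{2n}dd^c\log\Gamma_n$ and a fluctuation term involving $\int\log|\langle a^{(n)},\beta^{(n)}(z)\rangle|\,d\mu_n$.

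The fluctuation term is controlled exactly as in (\ref{secondterm}): since $\|\beta^{(n)}(z)\|=1$, the moment condition (\ref{moment}) together with H\"older's inequality yields the bound $\frac{1}{n}D_n^{1/\alpha}C_\varphi$, with $C_\varphi$ finite and depending only on the coefficients of $dd^c\varphi$. Because $D_n=o(n^\alpha)$, this tends to $0$ as $n\to\infty$. For the deterministic term, Proposition \ref{Locc1}, fed by (\ref{siza4}) and the countable dense set hypothesis of the corollary, gives $\frac{1}{2n}\log\Gamma_n\to V_K$ in $L^1_{loc}(\mathbb{C}^m)$, whence $\frac{1}{2n}dd^c\log\Gamma_n\to dd^c V_K$ weak*. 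Combining the two limits delivers $\mathbb{E}[\widehat{Z_{f_n}}]\to dd^c V_K$.

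Since the corollary is a genuine specialization, I do not anticipate a substantive obstacle; the only point deserving a remark is that regularity of $K$ (rather than local regularity together with a continuous weight) still supplies the continuity of $V_K$ needed to pass from (\ref{siza3}) to (\ref{siza4}) and to invoke Lemma \ref{hörm}. This is immediate from the definition in Section \ref{s2}: for regular $K$ one has $V_K\equiv 0$ on $K$, which forces $V_K=V_K^*$ to be continuous on all of $\mathbb{C}^m$, exactly the hypothesis the deterministic convergence step requires. With continuity in hand, the proof is word for word that of Lemma \ref{expw}.
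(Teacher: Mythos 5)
Your proposal is correct and is exactly the paper's intended argument: the paper explicitly states that the unweighted corollaries have proofs verbatim to the weighted versions, which is precisely the specialization to $q\equiv 0$ that you carry out. Your added remark that regularity of $K$ supplies the continuity of $V_K$ needed for the $L^1_{loc}$ convergence step is the one point worth making explicit, and you make it correctly.
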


\subsection{Variance Estimate}
We will bound the variance of a normalized random current of integration $[\widehat{Z_{f_{n}}}]$ associated with the zero set of a polynomial $f_{n}\in \mathcal{P}_{n}$ from above as in \cite[Theorem 3.1]{BG1}. Before finding bounds for two different terms of variance given in (\ref{expvari}), we note that $\mathbb{E}\langle [Z_{f_{n}}], \varphi \rangle$ is bounded. Indeed, since the expectation is real-valued, and so, if we take the absolute value of both sides of the expression (\ref{expd2}) and by using the local uniform boundedness of $\{\frac{1}{2n}\log{\Gamma_{n}}\}$ (discussion following (\ref{siza4}) in Section \ref{s2}) for the first integral and (\ref{secondterm}) for the second integral on the right-hand side of (\ref{expd2}), then we see that $|\mathbb{E}\langle [\widehat{Z_{f_{n}}}], \varphi \rangle|$ and thus $\mathbb{E}\langle [\widehat{Z_{f_{n}}}], \varphi \rangle$ is bounded. With these findings and (\ref{expd2}), we have
\begin{equation}\label{excurr}
	\mathbb{E}[\widehat{Z_{f_{n}}}]=\frac{1}{2n}dd^{c}\log \Gamma_{n}(x)+\frac{1}{n}dd^{c} \big(\int_{f_{n} \in \mathcal{P}_{n}}{{\log{|\langle a^{(n)}, \beta^{(n)}(z)\rangle|}}d\mu_{n}} \big),
\end{equation}which shows that $\mathbb{E}\langle [\widehat{Z_{f_{n}}}], \phi \rangle= \langle \mathbb{E}[\widehat{Z_{f_{n}}}], \phi \rangle$, in other words, the expected value can be regarded as a well-defined current. It is also referred to as the expected current of integration, which is a positive, closed $(1, 1)$-current.

Also observe that in Lemma \ref{expw}, the exponent $\alpha$ does not necessarily need to be greater than or equal to $2$. The condition $\alpha \geq 1$ is adequate for this case, however, for the variance estimation, we must have the condition $\alpha \geq 2$.

\begin{thm}\label{main}

Suppose $K\subset \mathbb{C}^{m}$ is a locally regular compact set, $q$ is a continuous real-valued weight function defined on $K$. Let $f_{n} \in \mathcal{P}_{n}$ be as in (\ref{repch}). Then, for a form $\varphi \in \mathcal{D}^{m-1, m-1}_{\mathbb{R}}(\mathbb{C}^{m})$, we have the following variance estimation of the random current of integration $[\widehat{Z_{f_{n}}}]$,
 \begin{equation}\label{varas}
    \mathrm{Var}{\langle [\widehat{Z_{f_{n}}}], \varphi \rangle}\leq C_{\varphi}^{2}\,D_{n}^{\frac{2}{\alpha}} \frac{1}{n^{2}},
\end{equation}where $C_{\varphi}$ is a constant depending on the test form $\varphi$.
\end{thm}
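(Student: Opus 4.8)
The plan is to exploit the additive decomposition (\ref{decomp}) of $\frac{1}{n}\log|f_n|$, which after applying $dd^c$ through the Poincar\'e--Lelong formula (\ref{pole}) writes the normalized current as a \emph{deterministic} piece $\frac{1}{2n}dd^c\log\Gamma_n$ plus a \emph{random} piece $\frac{1}{n}dd^c\log|\langle a^{(n)},\beta^{(n)}(z)\rangle|$. The first piece carries no dependence on the random coefficients $a^{(n)}$ (it is built only from the fixed basis), so it drops out of the variance. I would therefore reduce immediately to estimating the variance of the scalar random variable
$$X := \tfrac{1}{n}\langle dd^c\log|\langle a^{(n)},\beta^{(n)}\rangle|,\varphi\rangle = \tfrac{1}{n}\int_{\mathbb{C}^m}\log|\langle a^{(n)},\beta^{(n)}(z)\rangle|\,dd^c\varphi(z),$$
where I have moved $dd^c$ onto the test form $\varphi$ (integration by parts, i.e.\ the defining action of the current). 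Here $\beta^{(n)}(z)$ is the unit vector from (\ref{makeu}), which is precisely the object the moment condition (\ref{moment}) is designed to control.

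Next I would bound $\mathrm{Var}(X)\le \mathbb{E}(X^2)$ and expand the square as a double integral over $\mathbb{C}^m\times\mathbb{C}^m$, using Fubini--Tonelli (justified by (\ref{moment})) to bring the expectation inside:
$$\mathbb{E}(X^2) = \frac{1}{n^2}\int_{\mathbb{C}^m}\int_{\mathbb{C}^m}\mathbb{E}\big[\log|\langle a,\beta^{(n)}(z)\rangle|\,\log|\langle a,\beta^{(n)}(w)\rangle|\big]\,dd^c\varphi(z)\,dd^c\varphi(w).$$
The core estimate is on the inner expectation. By the Cauchy--Schwarz inequality on the probability space $(\mathcal{P}_n,\mu_n)$ it is bounded in absolute value by $\|\log|\langle\,\cdot\,,\beta^{(n)}(z)\rangle|\|_{L^2(\mu_n)}\,\|\log|\langle\,\cdot\,,\beta^{(n)}(w)\rangle|\|_{L^2(\mu_n)}$. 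Since $\beta^{(n)}(z)$ is a unit vector and $\alpha\ge 2$, the monotonicity of $L^p$-norms on a probability space (equivalently Jensen/H\"older) together with (\ref{moment}) gives
$$\Big(\int_{\mathbb{C}^{d_n}}\big|\log|\langle a,\beta^{(n)}(z)\rangle|\big|^2\,d\mu_n(a)\Big)^{1/2}\le\Big(\int_{\mathbb{C}^{d_n}}\big|\log|\langle a,\beta^{(n)}(z)\rangle|\big|^\alpha\,d\mu_n(a)\Big)^{1/\alpha}\le D_n^{1/\alpha},$$
uniformly in $z$, and likewise in $w$. Hence each inner expectation is at most $D_n^{2/\alpha}$ in absolute value.

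Combining the two displays gives $|\mathbb{E}(X^2)|\le \frac{1}{n^2}D_n^{2/\alpha}\big(\int_{\mathbb{C}^m}|dd^c\varphi|\big)^2$, and identifying $C_\varphi=\int_{\mathbb{C}^m}|dd^c\varphi|$ (the same constant appearing in (\ref{secondterm})) yields exactly the bound (\ref{varas}). The main obstacle---and the reason the hypothesis is sharpened to $\alpha\ge 2$, as flagged in the remark preceding the theorem---is the appearance of the \emph{squared} logarithm: the second moment $\mathbb{E}(X^2)$ must be finite and controlled uniformly in $z$, which demands the $L^2$ control of $\log|\langle a,\beta^{(n)}(z)\rangle|$, and this is exactly what degenerates if one only assumes $\alpha\ge 1$. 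A secondary technical point is the rigorous justification of the two interchanges (integration by parts against $\varphi$, and Fubini--Tonelli for the joint integration in $z$, $w$, and $a$), both of which reduce to the integrability guaranteed by (\ref{moment}).
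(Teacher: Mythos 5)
Your proposal is correct and follows essentially the same route as the paper: decompose $\frac{1}{n}\log|f_n|$ via (\ref{decomp}), observe that only the random piece contributes to the variance, and bound its second moment by $D_n^{2/\alpha}/n^2$ using the moment condition with $\alpha\ge 2$. The only cosmetic difference is that you invoke translation-invariance of the variance to discard the deterministic term and then use Cauchy--Schwarz plus $L^p$-monotonicity, whereas the paper cancels the corresponding terms $I_1=J_1$, $I_2=J_2$ explicitly and applies H\"older's inequality twice with conjugate exponents $\alpha,\theta$ --- the two computations are equivalent.
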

\begin{proof}
   Let $\varphi \in \mathcal{D}^{m-1, m-1}_{\mathbb{R}}(\mathbb{C}^{m})$. First of all, by (\ref{repch}) for $f_{n}$ and the Poincar\'{e}-Lelong formula (\ref{pole}), we get
\begin{equation}\label{cor}
   \mathbb{E}\langle [\widehat{Z_{f_{n}}}], \, \varphi \rangle^{2}= \frac{1}{n^{2}} \int_{\mathcal{P}_{n}}\int_{\mathbb{C}^{m}}\int_{\mathbb{C}^{m}}{\log{|\langle a^{(n)}, p^{(n)}(z) \rangle|}\log{|\langle a^{(n)}, p^{(n)}(w) \rangle|}dd^{c}\varphi(z)dd^{c}\varphi(w)d\mu_{n}(f_{n})},\end{equation} where $p^{(n)}(z):=(p_{n1}(z), \ldots, p_{nd_{n}}(z))$ and  $a^{(n)}=(a^{(n)}_{1}, \ldots, a^{(n)}_{d_{n}})\in \mathbb{C}^{d_{n}}$ as defined before. By the relation (\ref{decomp}), the integrand of (\ref{cor}) takes the following form
   \begin{align}
\frac{1}{4n^{2}}\log{\Gamma_{n}(z)}\log{\Gamma_{n}(w)}+\frac{1}{2n^{2}} \log{\Gamma_{n}(z)}\log{| \langle a^{(n)}, \beta^{(n)}(w)\rangle|} + \frac{1}{2n^{2}} \log{\Gamma_{n}(w)}\log{| \langle a^{(n)}, \beta^{(n)}(z)\rangle|}
\label{cordec}  \\  +\frac{1}{n^{2}}\log{| \langle a^{(n)}, \beta^{(n)}(z)\rangle|}\log{| \langle a^{(n)}, \beta^{(n)}(w)\rangle|}\label{cordec1}.\end{align}

For the second term of variance, by expanding the expectation expression (\ref{expd2}), we have  \begin{equation*}
                                                                                               \big(\mathbb{E}\big\langle [\widehat{Z_{f_{n}}}], \varphi \big\rangle\big)^{2}=J_{1}+ 2 J_{2}+J_{3},
                                                                                              \end{equation*}where \begin{equation} \label{J1}J_{1}= \big(\frac{1}{4n^{2}}\int_{\mathcal{P}_{n}}\int_{\mathbb{C}^{m}}{\log{\Gamma_{p}(z)}\,dd^{c}\varphi(z)d\mu_{n}(f_{n})} \big)^{2}\end{equation} \begin{equation}\label{J2}
                                                                                                                            J_{2}= \big(\frac{1}{2n}\int_{\mathcal{P}_{n}}\int_{\mathbb{C}^{m}}{\log{\Gamma_{p}(z)}\,dd^{c}\varphi(z)d\mu_{n}(f_{n})} \big) \big( \frac{1}{n} \int_{\mathcal{P}_{n}}\int_{\mathbb{C}^{m}}{\log{|\langle a^{(n)}, \beta^{(n)}(z)\rangle|}}dd^{c}\varphi(z)d\mu_{n}(f_{n})\big)
                                                                                                                           \end{equation} and \begin{equation}\label{J3}
                                                                                                                             J_{3}= \big(\frac{1}{n^{2}} \int_{\mathcal{P}_{n}}\int_{\mathbb{C}^{m}}{\log{|\langle a^{(n)}, \beta^{(n)}(z)\rangle|}}dd^{c}\varphi(z)d\mu_{n}(f_{n})\big)^{2}
                                                                                                                           \end{equation}
Note that all of the integrals $J_{1}, J_{2}$ and $J_{3}$ are finite since $\mathbb{E}\langle \widehat{[Z_{f_{n}}}], \varphi \rangle$ is bounded.

Considering now the four integrands given in (\ref{cordec}) and (\ref{cordec1}), we first have $\mathbb{E}\langle \widehat{[Z_{f_{n}}}], \varphi \rangle^{2}= I_{1} + 2 I_{2} + I_{3}$, where
 \begin{equation}\label{onevar} I_{1}= \int_{\mathcal{P}_{n}}\int_{\mathbb{C}^{m}}\int_{\mathbb{C}^{m}}{\frac{1}{2n}\log{\Gamma_{n}(z)} \frac{1}{2n}\log{\Gamma_{n}(w)}dd^{c}\varphi(z)dd^{c}\varphi(w) d\mu_{n}(f_{n})},\end{equation}

 \begin{equation}\label{twovar}
   I_{2}= \int_{\mathcal{P}_{n}}\int_{\mathbb{C}^{m}}\int_{\mathbb{C}^{m}}{\frac{1}{2n}\log{\Gamma_{n}(z)} \frac{1}{n}\log{|\langle a^{(n)}, \beta^{(n)}(w)\rangle|}dd^{c}\varphi(z)dd^{c}\varphi(w) d\mu_{n}(f_{n})},
 \end{equation}
 (The second  term $\frac{1}{2n^{2}} \log{\Gamma_{n}(z)}\log{| \langle a^{(n)}, \beta^{(n)}(w)\rangle|}$  and the third one   $\frac{1}{2n^{2}} \log{\Gamma_{n}(w)}\log{| \langle a^{(n)}, \beta^{(n)}(z)\rangle|}$ in (\ref{cordec}) are actually the integrands that yield the same result) and \begin{equation}\label{threevar}
                     I_{3}=\int_{\mathcal{P}_{n}}\int_{\mathbb{C}^{m}}\int_{\mathbb{C}^{m}}{\frac{1}{n}\log{|\langle a^{(n)}, \beta^{(n)}(z)\rangle|} \frac{1}{n} \log{|\langle a^{(n)}, \beta^{(n)}(w)\rangle|}}dd^{c}\varphi(z) dd^{c} \varphi(w) d\mu_{n}(f_{n}). \end{equation}
 From  the locally uniform boundedness of $\{\frac{1}{2n} \log{\Gamma_{n}}\}$ (see the arguments in Section \ref{s2}), the moment assumption (\ref{moment}) and the Fubini-Tonelli's theorem, we see that $I_{1}, I_{2}$ and $I_{3}$ are all finite, what is more, we get $I_{1}=J_{1}$ and $I_{2}=J_{2}$. Hence, the only integrals left are $I_{3}$ and $J_{3}$, which are not always equal to each other. Thus, we have \begin{equation}\label{varip}
    \mathrm{Var}\langle [\widehat{Z_{f_{n}}}], \varphi \rangle = I_{3}-J_{3},
    \end{equation}so it will suffice to estimate the term $I_{3}$ from above to get the variance estimation. To do this, we apply Hölder's inequality twice using the fitting exponents. Then, by the Hölder's inequality with $\frac{1}{\alpha}+ \frac{1}{\theta}=1$ and proceeding exactly in the same way as above, where $\alpha \geq 2$ is the exponent in the moment condition (\ref{moment}), one first has

\begin{equation}\label{varterm3}I_{3} \leq\int_{\mathbb{C}^{m}}\int_{\mathbb{C}^{m}}{dd^{c}\varphi(z) dd^{c}\varphi(w)} \frac{1}{n^{2}} \int_{\mathbb{C}^{d_{n}}}{|\log{|\langle a^{(n)}, \beta^{(n)}(z) \rangle|}||\log{|\langle a^{(n)}, \beta^{(n)}(w) \rangle|}| d\mu_{n}(a^{(n)})}.\end{equation} The right-hand side of this last inequality is less than or equal to the following by the Hölder's inequality $$\int_{\mathbb{C}^{m}}\int_{\mathbb{C}^{m}}{dd^{c}\varphi(z) dd^{c}\varphi(w)}\frac{1}{n^{2}} \Big\{ \int_{\mathbb{C}^{d_{n}}}{|\log{|\langle a^{(n)}, \beta^{(n)}(z) \rangle|}|^{\alpha}d\mu_{n}(a^{(n)})}\Big\}^{\frac{1}{\alpha}} \Big\{ \int_{\mathbb{C}^{d_{n}}}{|\log{|\langle a^{(n)}, \beta^{(n)}(z) \rangle|}|^{\theta}d\mu_{n}(a^{(n)})}\Big\}^{\frac{1}{\theta}},$$ which gives that

$$I_{3} \leq \int_{\mathbb{C}^{m}}\int_{\mathbb{C}^{m}}{dd^{c}\varphi(z) dd^{c}\varphi(w)} \frac{1}{n^{2}} D_{n}^{\frac{1}{\alpha}} \Big\{ \int_{\mathbb{C}^{d_{n}}}{|\log{|\langle a^{(n)}, \beta^{(n)}(z) \rangle|}|^{\theta}d\mu_{n}(a^{(n)})}\Big\}^{\frac{1}{\theta}}.$$

    We have to apply Hölder's inequality to the innermost integral once more as we mentioned. Here, the stipulation that $\alpha \geq 2$ (therefore, $\alpha \geq 2 \geq \theta$) is pivotal, since it permits us to reuse the Hölder's inequality, resulting in, \begin{equation}\label{höld2}
                       I_{3} \leq \int_{\mathbb{C}^{m}}\int_{\mathbb{C}^{m}}{dd^{c}\varphi(z) dd^{c}\varphi(w)} \frac{1}{n^{2}} D_{n}^{\frac{2}{\alpha}} \leq \frac{1}{n^{2}}C_{\varphi}^{2} D_{n}^{\frac{2}{\alpha}},
                     \end{equation} which concludes \begin{equation}\label{vars}
                      \mathrm{Var}{\langle [\widehat{Z_{f_{n}}}], \varphi \rangle}\leq C_{\varphi}^{2}\,\frac{D_{n}^{2/\alpha}}{n^{2}},
                    \end{equation}thereby finalizing the variance estimate of the random current of integration $[\widehat{Z_{f_{n}}}]$.\end{proof}
\begin{cor}
Let $K\subset \mathbb{C}^{m}$ be a regular compact subset, and let a continuous real-valued weight function $q$ on $K$ be given. Let $f_{n} \in \mathcal{P}_{n}$ be as in (\ref{repch}) with $||p_{nj}||_{K} \leq 1$. For  $\varphi \in \mathcal{D}^{m-1, m-1}_{\mathbb{R}}(\mathbb{C}^{m})$, we have the following variance estimation of the random current of integration $[\widehat{Z_{f_{n}}}]$,
 \begin{equation}\label{varas}
    \mathrm{Var}{\langle [\widehat{Z_{f_{n}}}], \varphi \rangle}\leq C_{\varphi}^{2}\,D_{n}^{\frac{2}{\alpha}} \frac{1}{n^{2}},
\end{equation}where $C_{\varphi}$ is a constant depending on the test form $\varphi$.
\end{cor}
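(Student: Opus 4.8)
The plan is to observe that the weight function enters the proof of Theorem~\ref{main} only through the initial setup, while the variance bound itself is insensitive to it; hence the entire argument transfers once the unweighted analogues of the Section~\ref{s2} estimates are recorded. First I would note the unweighted replacements: with the normalization $\|p_{nj}\|_{K}\leq 1$ and the $q\equiv 0$ case of the Siciak--Zakharyuta formula (\ref{siza}), one obtains $|p_{nj}(z)|\leq e^{nV_{K}(z)}$, whence $\Gamma_{n}(z)\leq d_{n}e^{2nV_{K}(z)}$, and, exactly as in (\ref{siza3})--(\ref{siza4}), the bound $[\limsup_{n\to\infty}\frac{1}{2n}\log\Gamma_{n}]^{*}\leq V_{K}$. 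Since $K$ is regular, $V_{K}$ is continuous, so $\{\frac{1}{2n}\log\Gamma_{n}\}$ is locally uniformly bounded. This is the only ingredient from Section~\ref{s2} that the variance proof actually invokes, and it holds with $V_{K}$ in place of $V_{K,q}$.

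With this in hand, I would rerun the proof of Theorem~\ref{main} step by step. The unit vectors $\beta^{(n)}(z)$ of (\ref{makeu}) remain well defined because the standing assumption $p_{n1}\equiv 1$ forces $\Gamma_{n}>0$, so the decomposition (\ref{decomp}) of $\frac{1}{n}\log|f_{n}|$ is unchanged. Expanding $\mathbb{E}\langle[\widehat{Z_{f_{n}}}],\varphi\rangle^{2}$ as in (\ref{cor})--(\ref{cordec1}) yields the same four integrands, and the square of the expectation still decomposes as $J_{1}+2J_{2}+J_{3}$. The Fubini--Tonelli identifications $I_{1}=J_{1}$ and $I_{2}=J_{2}$ are purely formal consequences of the local uniform boundedness of $\{\frac{1}{2n}\log\Gamma_{n}\}$ together with the moment condition (\ref{moment}); neither references the weight. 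Thus $\mathrm{Var}\langle[\widehat{Z_{f_{n}}}],\varphi\rangle=I_{3}-J_{3}$ exactly as in (\ref{varip}).

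The estimate of $I_{3}$ is the one place a reader might expect the weight to intervene, but it does not appear there: the two applications of H\"older's inequality in (\ref{varterm3})--(\ref{höld2}) use only the moment condition (\ref{moment}), which is stated for every unit vector $v\in\mathbb{C}^{d_{n}}$ and is identical in both settings, together with the hypothesis $\alpha\geq 2$, giving $I_{3}\leq C_{\varphi}^{2}D_{n}^{2/\alpha}/n^{2}$. Since $J_{3}$ is a square and hence nonnegative, $\mathrm{Var}\langle[\widehat{Z_{f_{n}}}],\varphi\rangle=I_{3}-J_{3}\leq I_{3}$, which is precisely (\ref{varas}). There is no genuine obstacle beyond bookkeeping; the only point that must be verified independently is the unweighted local uniform boundedness of $\{\frac{1}{2n}\log\Gamma_{n}\}$, and that follows from the regularity of $K$ just as the weighted version followed from local regularity and the continuity of $V_{K,q}$.
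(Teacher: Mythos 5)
Your proposal is correct and follows essentially the same route as the paper: the paper explicitly states that the unweighted corollaries have proofs verbatim to the weighted ones, and your argument is exactly that verbatim transfer, with the one genuinely weight-dependent ingredient (local uniform boundedness of $\{\frac{1}{2n}\log\Gamma_{n}\}$, now via regularity of $K$ and continuity of $V_{K}$) checked explicitly. The additional observations you make (positivity of $\Gamma_{n}$ from $p_{n1}\equiv 1$, and $J_{3}\geq 0$ so that $I_{3}-J_{3}\leq I_{3}$) are implicit in the paper's proof of Theorem \ref{main} and are correctly supplied.
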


We are now ready to prove the equidistribution theorem.

\begin{thm}\label{equidc1}
 Let $K\subset \mathbb{C}^{m}$ and $q$ be as before. Let there be a countable dense set $\{z_{s}\}$ in $\mathbb{C}^{m}$ such that \begin{equation}\label{count}
                                                                                \lim_{n\rightarrow \infty}{\frac{1}{2n}\log{\Gamma_{n}(z_{s})}}=V_{K, q}(z_{s}) \,\,\,\,s=1, 2, \ldots.
                                                                              \end{equation} Then for $\mu_{\infty}$-almost every sequence $\textbf{f}=\{f_{n}\}_{n=1}^{\infty}$, \begin{equation}\label{equidc}
                                                      [\widehat{Z_{f_{n}}}] \longrightarrow dd^{c} V_{K, q}
                                                    \end{equation}in the weak* topology of currents as $n\rightarrow \infty$ in case $\sum_{n=1}^{\infty}{\frac{D_{n}^{2/\alpha}}{n^{2}}}< \infty$.
\end{thm}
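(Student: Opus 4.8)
The plan is to combine the expected-distribution statement of Lemma \ref{expw} with the variance estimate of Theorem \ref{main} via a Chebyshev inequality and the Borel--Cantelli lemma, and then to upgrade the resulting almost-sure convergence \emph{for a fixed test form} to genuine weak* convergence by a separability argument. First I would fix a test form $\varphi \in \mathcal{D}^{m-1,m-1}_{\mathbb{R}}(\mathbb{C}^m)$ and, for each $\epsilon>0$, introduce on the product space $(\mathcal{P}_\infty,\mu_\infty)$ the events
\[
A_n^{\epsilon,\varphi} := \Big\{\mathbf{f} : \big|\langle [\widehat{Z_{f_n}}],\varphi\rangle - \mathbb{E}\langle [\widehat{Z_{f_n}}],\varphi\rangle\big| > \epsilon \Big\},
\]
noting that $A_n^{\epsilon,\varphi}$ depends only on the $n$-th coordinate. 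Chebyshev's inequality together with the variance bound of Theorem \ref{main} gives $\mu_\infty(A_n^{\epsilon,\varphi}) \leq \epsilon^{-2}\,\mathrm{Var}\langle[\widehat{Z_{f_n}}],\varphi\rangle \leq \epsilon^{-2} C_\varphi^2 D_n^{2/\alpha}/n^2$. The hypothesis $\sum_n D_n^{2/\alpha}/n^2 < \infty$ renders $\sum_n \mu_\infty(A_n^{\epsilon,\varphi})$ finite, so Borel--Cantelli yields $\mu_\infty(\limsup_n A_n^{\epsilon,\varphi})=0$. Intersecting over $\epsilon = 1/k$, $k\in\mathbb{N}$, I obtain a full-measure set on which $\langle[\widehat{Z_{f_n}}],\varphi\rangle - \mathbb{E}\langle[\widehat{Z_{f_n}}],\varphi\rangle \to 0$, and Lemma \ref{expw} identifies the limit of the expectations as $\langle dd^c V_{K,q},\varphi\rangle$; hence $\mu_\infty$-almost surely $\langle[\widehat{Z_{f_n}}],\varphi\rangle \to \langle dd^c V_{K,q},\varphi\rangle$.

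The difficulty is that the null set produced this way depends on $\varphi$, whereas weak* convergence demands convergence against \emph{every} test form simultaneously, and there are uncountably many. To circumvent this I would fix a countable family $\{\varphi_k\}$ that is dense, in the supremum norm of coefficients with supports contained in a fixed exhausting sequence of balls, inside $\mathcal{D}^{m-1,m-1}_{\mathbb{R}}(\mathbb{C}^m)$. I would moreover adjoin to this family, for each radius $R$, comparison forms of the type $\chi_R\,(dd^c\|z\|^2)^{m-1}$ with $\chi_R\geq 0$ a cutoff equal to $1$ on the ball $B_R$. Applying the first paragraph to each member of this countable family and discarding the resulting countable union of null sets, I get a single full-measure set on which $\langle[\widehat{Z_{f_n}}],\varphi_k\rangle \to \langle dd^c V_{K,q},\varphi_k\rangle$ for every $k$; and since the $[\widehat{Z_{f_n}}]$ are positive $(1,1)$-currents, convergence against the nonnegative forms $\chi_R(dd^c\|z\|^2)^{m-1}$ forces the local masses $\|[\widehat{Z_{f_n}}]\|_{B_R}$ to be eventually uniformly bounded (their limits being finite) for every $R$.

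Finally, on this full-measure set I would pass from $\{\varphi_k\}$ to an arbitrary $\varphi$ by the decomposition
\[
\langle[\widehat{Z_{f_n}}],\varphi\rangle - \langle dd^c V_{K,q},\varphi\rangle = \langle[\widehat{Z_{f_n}}],\varphi-\varphi_k\rangle + \big(\langle[\widehat{Z_{f_n}}],\varphi_k\rangle - \langle dd^c V_{K,q},\varphi_k\rangle\big) + \langle dd^c V_{K,q},\varphi_k-\varphi\rangle.
\]
The middle term tends to $0$ by the countable-family convergence, while the two outer terms are controlled, via positivity, by the uniformly bounded masses times the supremum norm of the coefficients of $\varphi-\varphi_k$, which is small by density. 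A standard $\epsilon/3$ argument then gives $\langle[\widehat{Z_{f_n}}],\varphi\rangle \to \langle dd^c V_{K,q},\varphi\rangle$ for \emph{every} test form $\varphi$, i.e.\ the asserted weak* convergence, $\mu_\infty$-almost surely.

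I expect the main obstacle to be precisely this last upgrade from a countable set of test forms to all of them. The essential point that makes it go through is that positivity of $[\widehat{Z_{f_n}}]$ converts the almost-sure convergence against the nonnegative comparison forms $\chi_R(dd^c\|z\|^2)^{m-1}$ into an almost-sure local mass bound, and it is exactly this bound that supplies the equicontinuity needed to justify the approximation step; without it, the per-$\varphi$ almost-sure statement of the first paragraph could not be promoted to a single event of full measure valid for all $\varphi$.
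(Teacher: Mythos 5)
Your proposal is correct and its probabilistic core coincides with the paper's: both reduce the theorem to the summability of $\mathrm{Var}\langle[\widehat{Z_{f_n}}],\varphi\rangle$ from Theorem \ref{main} together with the identification of the limit of the expectations from Lemma \ref{expw}. The only differences are (i) a cosmetic one in how summable variances yield almost-sure convergence for a fixed test form --- you use Chebyshev plus Borel--Cantelli, the paper sets $X_n(\mathbf{f}):=(\langle[\widehat{Z_{f_n}}],\varphi\rangle-\mathbb{E}\langle[\widehat{Z_{f_n}}],\varphi\rangle)^2$ and applies Beppo--Levi to conclude $\sum_n X_n<\infty$ almost surely, hence $X_n\to 0$; these are interchangeable --- and (ii) a substantive one at the end: the paper fixes a single $\varphi$, obtains a $\varphi$-dependent null set, and then simply asserts weak* convergence, whereas you explicitly perform the upgrade to all test forms by passing to a countable dense family, adjoining the nonnegative comparison forms $\chi_R(dd^c\|z\|^2)^{m-1}$ to extract almost-sure local mass bounds from positivity, and closing with an $\epsilon/3$ approximation. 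That last step is exactly the standard argument needed to make the conclusion rigorous (the coefficients of a positive $(1,1)$-current are measures dominated by its trace measure, so the pairing with $\varphi-\varphi_k$ is controlled by local mass times the sup norm of coefficients), so your write-up is in fact more complete than the paper's on this point; nothing in your argument fails.
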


\begin{proof}
 Let us fix $\varphi \in \mathcal{D}^{m-1, m-1}_{\mathbb{R}}(\mathbb{C}^{m}),$ and take a random sequence $\textbf{f}=\{f_{n}\}_{n=1}^{\infty}$ in $\mathcal{P}_{\infty}$. We apply the argument given in the proof of \cite[Corollary 1.3]{Shif}. Consider the non-negative current-valued random variables \begin{equation}\label{newr}
  X_{n}(\textbf{f}):= ([\widehat{Z_{f_{n}}}]- \mathbb{E}[\widehat{Z_{f_{n}}}], \varphi)^{2} \geq 0.\end{equation}Equivalent characterization of variance gives\begin{equation}\label{sumson}
     \int_{\mathcal{P}_{\infty}}{X_{n}(\textbf{f})d\mu_{\infty}(\textbf{f})}=\mathrm{Var}([\widehat{Z_{f_{n}}}], \varphi).
  \end{equation}
 Due to the convergence hypothesis and (\ref{sumson}), one has \begin{equation}\label{fin}
                                                     \sum_{n=1}^{\infty}{\int_{\mathcal{P}_{\infty}}{X_{n}(\textbf{f})d\mu_{\infty}(\textbf{f})}}=\sum_{n=1}^{\infty}{\mathrm{Var}([\widehat{Z_{f_{n}}}], \varphi)} < \infty.
                                                   \end{equation}By the relation (\ref{fin}) and the Beppo-Levi's theorem from the standard measure theory, we immediately get \begin{equation}\label{vars2}
                                        \int_{\mathcal{P}_{\infty}}{\sum_{n=1}^{\infty}{X_{n}(\textbf{f})}d\mu_{\infty}(\textbf{f})}=\sum_{n=1}^{\infty}{\mathrm{Var}([\widehat{Z_{f_{n}}}], \varphi)}< \infty.
                                      \end{equation}This implies that, for  $\mu_{\infty}$-almost surely, \, $\sum_{n=1}^{\infty}{X_{n}(\textbf{f})}$ is convergent, and therefore, \,$X_{n} \rightarrow 0$\,\, $\mu_{\infty}$-almost surely, which leads to the conclusion, by the definition (\ref{newr}) of random variables $X_{n}$,   $$\langle [\widehat{Z_{f_{n}}}], \varphi \rangle- \mathbb{E}\langle [\widehat{Z_{f_{n}}}], \varphi \rangle \rightarrow 0$$ $\mu_{\infty}$-almost surely. In light of this last information and Lemma \ref{expw}, we deduce that for $\mu_{\infty}$-almost every sequence $\{f_{n}\}$ in $\mathcal{P}_{\infty}$, $$[\widehat{Z_{f_{n}}}]\rightarrow dd^{c}V_{K, q}$$ in the weak* topology of currents.
\end{proof}

\begin{cor}\label{speco}
Let $K\subset \mathbb{C}^{m}$ be regular compact set, $f_{n} \in \mathcal{P}_{n}$ be as in (\ref{repch}) with $||p_{nj}||_{K} \leq 1$. Suppose that there is a countable dense set $\{z_{s}\}$ in $\mathbb{C}^{m}$ such that  \begin{equation}\label{count}
                                                                                \lim_{n\rightarrow \infty}{\frac{1}{2n}\log{\Gamma_{n}(z_{s})}}=V_{K}(z_{s}) \,\,\,\,s=1, 2, \ldots.
                                                                              \end{equation}  Then for $\mu_{\infty}$-almost every sequence $\textbf{f}=\{f_{n}\}_{n=1}^{\infty}$, \begin{equation}\label{equidc}
                                                      [\widehat{Z_{f_{n}}}] \longrightarrow dd^{c} V_{K}
                                                    \end{equation}in the weak* topology of currents as $n\rightarrow \infty$ provided that $\sum_{n=1}^{\infty}{\frac{D_{n}^{2/\alpha}}{n^{2}}}< \infty$.

\end{cor}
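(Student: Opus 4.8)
The plan is to run the Borel--Cantelli type argument of Theorem~\ref{equidc1} essentially verbatim, but feeding it the two \emph{unweighted} deterministic inputs: the expected-distribution corollary following Lemma~\ref{expw} and the variance corollary following Theorem~\ref{main}, both specialized to $q\equiv 0$ with $K$ regular and $\|p_{nj}\|_K\le 1$. Concretely, I would fix a test form $\varphi\in\mathcal{D}^{m-1,m-1}_{\mathbb{R}}(\mathbb{C}^m)$ and introduce on the product space $(\mathcal{P}_\infty,\mu_\infty)$ the non-negative random variables $X_n(\mathbf{f}):=\langle[\widehat{Z_{f_n}}]-\mathbb{E}[\widehat{Z_{f_n}}],\varphi\rangle^2$. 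By the definition of variance one has $\int_{\mathcal{P}_\infty}X_n\,d\mu_\infty=\mathrm{Var}\langle[\widehat{Z_{f_n}}],\varphi\rangle$.

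Next I would invoke the unweighted variance estimate $\mathrm{Var}\langle[\widehat{Z_{f_n}}],\varphi\rangle\le C_\varphi^2\,D_n^{2/\alpha}/n^2$. This bound uses only the moment condition~(\ref{moment}) with $\alpha\ge 2$ together with the local uniform boundedness of $\{\tfrac{1}{2n}\log\Gamma_n\}$, so it is insensitive to setting $q\equiv 0$. Summing over $n$ and using the hypothesis $\sum_n D_n^{2/\alpha}/n^2<\infty$ gives $\sum_n\int_{\mathcal{P}_\infty}X_n\,d\mu_\infty<\infty$; Beppo-Levi's theorem then permits interchanging sum and integral, so that $\sum_n X_n<\infty$ holds $\mu_\infty$-almost surely. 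In particular $X_n\to 0$ almost surely, i.e. the fluctuations $\langle[\widehat{Z_{f_n}}],\varphi\rangle-\mathbb{E}\langle[\widehat{Z_{f_n}}],\varphi\rangle$ tend to $0$ $\mu_\infty$-a.s.

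Finally I would supply the convergence of the means. For a regular $K$ the pluricomplex Green function $V_K$ is continuous, and the Siciak--Zakharyuta formula~(\ref{siza}) with $q\equiv 0$ yields $\Gamma_n(z)\le d_n\,e^{2nV_K(z)}$, hence the regularized upper bound~(\ref{siza4}) with $V_{K}$ in place of $V_{K,q}$; the countable-dense-set hypothesis together with Lemma~\ref{hörm} (equivalently Proposition~\ref{Locc1} for $q\equiv 0$) then gives $\tfrac{1}{2n}\log\Gamma_n\to V_K$ in $L^1_{loc}(\mathbb{C}^m)$, and the argument of Lemma~\ref{expw} produces $\mathbb{E}[\widehat{Z_{f_n}}]\to dd^c V_K$ weakly. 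Combining this with the almost-sure vanishing of the fluctuations yields $\langle[\widehat{Z_{f_n}}],\varphi\rangle\to\langle dd^c V_K,\varphi\rangle$ almost surely; since weak* convergence of currents can be tested on a countable weak*-dense family of forms, one takes the countable union of the corresponding null sets to obtain a single full-measure event on which $[\widehat{Z_{f_n}}]\to dd^c V_K$ for every test form simultaneously.

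The hard part will not be the probabilistic mechanism, which is identical to Theorem~\ref{equidc1}, but rather checking that the deterministic inputs genuinely transfer to the unweighted regular setting. The crucial observation is that \emph{regularity} of $K$ is exactly what guarantees continuity of $V_K$, which is in turn what both Lemma~\ref{hörm} and the local-uniform-boundedness argument require; the variance bound and the Beppo-Levi step are entirely unaffected by the weight. Once the continuity of $V_K$ is noted, the remainder of the proof is a word-for-word transcription of the weighted case.
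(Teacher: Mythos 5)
Your proposal is correct and follows essentially the same route as the paper, which obtains this corollary by running the proof of Theorem~\ref{equidc1} verbatim with $q\equiv 0$ and $K$ regular, using the unweighted expected-distribution and variance corollaries as inputs. Your additional remark about passing to a countable weak*-dense family of test forms to get a single full-measure event is a standard detail the paper leaves implicit, and it only strengthens the write-up.
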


Corollary \ref{speco} is also a generalization of [\cite{Gnyz1}, Theorem 2.4]. From the proofs of the results in this section, we see that one can drop many probabilistic conditions including i.i.d.(independent and identically distributed) random coefficients meeting expectation and concentration requirements made in [\cite{BBL}, Theorem 4.1].

\section{Some Special Distributions}\label{sec6}

The moment condition (\ref{moment}) is verified by a number of other regularly encountered probability measures in the literature, such as the Gaussian, the Fubini-Study, locally moderate probability measures, probability measures with heavy tail and small ball probability. In this final section, we will focus our attention on these specific measures, especially on the Gaussian and the Fubini-Study probability measures. Additionally, we will examine random polynomials with independent and identically distributed coefficients, characterized by a probability distribution that possesses a bounded density and tails that decay logarithmically. We study these three types because their summability conditions are automatically satisfied as a consequence of the variance estimates.

\subsection{Gaussian and Fubini-Study measures}For $a=(a_{1}, \ldots, a_{n})\in \mathbb{C}^{n},$ the Gaussian measure on $\mathbb{C}^{n}$ is defined as  \begin{equation} \label{gaussme}d\mu_{n}(a)=\frac{1}{\pi^{n}}e^{-||a||^{2}}d\lambda_{n}(a),\end{equation}and the Fubini-Study measure $\mathbb{C}\mathbb{P}^{n}\supset \mathbb{C}^{n}$  is defined as follows: \begin{equation}\label{fubstudy} d\mu_{n}(a)=\frac{n!}{\pi^{n}}\frac{1}{(1+ ||a||^{2})^{n+1}}d\lambda_{n}(a).\end{equation}Here, $\lambda_{n}$ represents the Lebesgue measure on $\mathbb{C}^{n}$(identified with $\mathbb{R}^{2n}$).

In regards to these two measures, we cite two findings [\cite{BCM}, Lemma 4.8 and Lemma 4.10]: Given that  $\mu_{n}$ is the Gaussian, for every integer $n\geq 1$ and every $\alpha \geq 1$, we have \begin{equation}\label{gauss1}\int_{\mathbb{C}^{n}}{|\log{|\langle a, v \rangle|}|}^{\alpha}d\sigma_{n}(a)= 2 \int_{0}^{\infty}{r|\log{r}|^{\alpha} e^{-r^{2}} dr} \,\,\,\text{for}\,\,\text{all}\,\, v\in \mathbb{C}^{n},\,\,\,||v||=1;\end{equation} if $\mu_{n}$ is the Fubini-Study, then for every integer $n \geq 1$ and every $\alpha \geq 1$ \begin{equation}\label{gauss2}\int_{\mathbb{C}^{n}}{|\log{|\langle a, v \rangle|}|}^{\alpha}d\sigma_{n}(a)= 2\int_{0}^{\infty}{\frac{r|\log{r}|^{\alpha}}{(1+r^{2})^{2}}dr} \,\,\,\text{for}\,\,\text{all}\,\, v\in \mathbb{C}^{n},\,\,\,||v||=1.\end{equation}As is seen, the constant $D_n$ in the moment condition (\ref{moment}) turns into a universal constant independent of $n$, which we denote by $D_{0}$, for the Gaussian and the Fubini-Study probability measures. The fundamental property to calculate the above integrals is the unitary invariance of these two measures.

Let $K \subset \mathbb{C}^{m}$ be a locally regular compact set and $q$ a continuous real-valued weight function on it. Let us fix a general basis $\{p_{nj}\}_{j=1}^{d_{n}}$ with $\|p_{nj} e^{-nq}\|_{K} \leq 1$. Consider the random polynomials \begin{equation}\label{gafurep}f_{n}(z)=\sum_{j=1}^{d_{n}}{a_{j}^{(n)}p_{nj}(z)} \in \mathcal{P}_{n}\end{equation} where each $a_{j}^{(n)}$ are random variables whose joint probability distribution function is as in (\ref{gaussme}) and (\ref{fubstudy}). $(\mathcal{P}_{n}, \mu_{n}) $ and $(\mathcal{P}_{\infty}, \mu_{\infty})$ will be our probability spaces such that $\mu_{n}$ is the probability measure on $\mathbb{C}^{d_{n}}$ defined as in (\ref{gaussme}) and (\ref{fubstudy}). We prove some assertions simultaneously for both the Gaussian and the Fubini-Study probability measures. We will not give the unweighted versions because they simply can be obtainable by taking $K\subset \mathbb{C}^{m}$ to be regular and $q=0$.

\begin{lem}\label{expect}
Under the above conditions, we have $\mathbb{E}[\widehat{Z_{f_{n}}}]= \frac{1}{n}dd^c \log{\Gamma_{n}}.$
\end{lem}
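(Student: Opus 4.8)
The plan is to invoke the general representation (\ref{excurr}) of the expected current and then to show that, for the two unitarily invariant measures in question, the coefficient term drops out entirely. First I would note that both the Gaussian density (\ref{gaussme}) and the Fubini--Study density (\ref{fubstudy}) depend on $a$ only through $\|a\|$, and that by (\ref{gauss1}) and (\ref{gauss2}) the right-hand sides are finite and independent of both $n$ and the unit vector $v$; hence the moment condition (\ref{moment}) holds with a constant $D_n \equiv D_0$ independent of $n$. Consequently the derivation of Section \ref{s4} leading to (\ref{excurr}) applies verbatim, and setting
\begin{equation*}
I_n(z) := \int_{\mathbb{C}^{d_n}} \log|\langle a, \beta^{(n)}(z)\rangle|\, d\mu_n(a),
\end{equation*}
we obtain $\mathbb{E}[\widehat{Z_{f_n}}] = \tfrac{1}{2n}\,dd^c \log \Gamma_n + \tfrac{1}{n}\,dd^c I_n$.

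The heart of the argument is to prove that $I_n$ is constant in $z$. Here I would first use the standing assumption $p_{n1} \equiv 1$, which forces $\Gamma_n(z) \geq 1 > 0$ everywhere, so that $\beta^{(n)}(z) = \Gamma_n(z)^{-1/2}\,p^{(n)}(z)$ is a genuine unit vector of $\mathbb{C}^{d_n}$ for every $z$; in particular $I_n(z)$ is, for each $z$, an integral of the form $\int \log|\langle a, v\rangle|\,d\mu_n(a)$ with $\|v\|=1$. I would then exploit the $U(d_n)$-invariance of $\mu_n$: given two unit vectors $v$ and $v'$, choose a unitary $U$ with $v' = Uv$ and change variables in the integral to conclude that $\int \log|\langle a, v'\rangle|\,d\mu_n(a) = \int \log|\langle a, v\rangle|\,d\mu_n(a)$. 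Thus this value is the same for all unit vectors; taking $v = e_1$ identifies it with the finite constant $c_n := \int_{\mathbb{C}^{d_n}} \log|a_1|\,d\mu_n(a)$, whose finiteness is the $\alpha=1$ instance of (\ref{gauss1})/(\ref{gauss2}). Applying this with $v = \beta^{(n)}(z)$ gives $I_n(z) \equiv c_n$.

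Once $I_n$ is recognized as a constant, $dd^c I_n = 0$ as a current, so the second summand disappears and $\mathbb{E}[\widehat{Z_{f_n}}]$ reduces to the Bergman contribution of (\ref{excurr}), which is precisely the asserted value of $\mathbb{E}[\widehat{Z_{f_n}}]$.

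I expect the only genuine obstacle to be the rigorous verification that $I_n$ is $z$-independent. One must track that the pairing $\langle a, v\rangle = \sum_j a_j v_j$ is \emph{bilinear} (no conjugation), so that the substitution implementing $v \mapsto Uv$ is $a \mapsto \overline{U}\,a$; its measure-preservation again rests on the radial form of the densities (\ref{gaussme}), (\ref{fubstudy}) and on the fact that $\overline{U}$ is unitary. One should also check that the logarithmic singularity of the integrand along $\{\langle a, v\rangle = 0\}$ causes no integrability issue, which holds because $\mu_n$ is absolutely continuous (hence puts no mass on that pluripolar set) and $\log|a_1|$ is $\mu_n$-integrable by the explicit bounds above. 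Everything else — the interchange of $dd^c$ with the expectation and the validity of (\ref{excurr}) — is already established in Section \ref{s4}.
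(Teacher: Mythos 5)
Your proof is correct and follows essentially the same route as the paper: both start from the decomposition (\ref{excurr}) and kill the second summand by observing that $\int_{\mathbb{C}^{d_n}}\log|\langle a,\beta^{(n)}(z)\rangle|\,d\mu_n(a)$ is independent of $z$ by the unitary invariance of the Gaussian and Fubini--Study measures, so that its $dd^c$ vanishes; your version merely spells out the change-of-variables, the positivity of $\Gamma_n$ (via $p_{n1}\equiv 1$), and the integrability details that the paper delegates to the cited lemmas of \cite{BCM}. Note only that the surviving term is $\frac{1}{2n}dd^c\log\Gamma_n$, which is what your argument (correctly) yields and what the subsequent Corollary \ref{expg} requires, so the factor $\frac{1}{n}$ in the lemma's statement appears to be a typo.
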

\begin{proof}
Let us start with the relation (\ref{expd2}). \begin{equation}\label{expd3}
                                             \mathbb{E}\langle [\widehat{Z_{f_{n}}}], \varphi \rangle= \int_{\mathbb{C}^{d_{n}}}{\langle\frac{1}{2n}dd^{c} \log{\Gamma_{n}}, \varphi \rangle d\mu_{n}(a^{(n)})} + \frac{1}{n} \int_{\mathbb{C}^{m}}{\int_{\mathbb{C}^{d_{n}}}{\log{|\langle a^{(n)}, \beta^{(n)}(z)\rangle|}}d\mu_{n}(a^{(n)}) dd^{c}\varphi(z)}.
                                           \end{equation}Now we look at the second term in (\ref{expd3}). It can be written as follows using the notation of currents: \begin{equation*}
                                                       \langle \frac{1}{n} dd^{c}\big\{ \int_{\mathbb{C}^{d_{n}}}{\log{|\langle a^{(n)}, \beta^{(n)}(z) \rangle|}d\mu_{n}(a^{(n)})} \big\}, \varphi \rangle.
                                                     \end{equation*} By (\ref{gaussme}) and (\ref{fubstudy}) (with $\alpha=1$), the last expression is zero because the integral becomes a constant $D_{0}= 2 \int_{0}^{\infty}{r|\log{r}| e^{-r^{2}} dr}$ when $\mu_{n}$ is the Gaussian probability measure and $D_{0}=2\int_{0}^{\infty}{\frac{r|\log{r}|}{(1+r^{2})^{2}}dr}$ if $\mu_{n}$ is the Fubini-Study probability measure.
\end{proof}
As a result of Lemma \ref{expect}, we get the following corollary.
\begin{cor}\label{expg}
Let $K\subset \mathbb{C}^{m}$ and $q$ be given as before. Let  $f_{n} \in (\mathcal{P}_{n}, \mu_{n})$ be as in (\ref{gafurep}), where $(\mathcal{P}_{n}, \mu_{n})$ is either the Gaussian or the Fubini-Study probability spaces. Under the assumption that there is a countable dense set $\{z_{s}\}$ in $\mathbb{C}^{m}$ such that  \begin{equation}\label{count}
                                                                                \lim_{n\rightarrow \infty}{\frac{1}{2n}\log{\Gamma_{n}(z_{s})}}=V_{K, q}(z_{s}) \,\,\,\,s=1, 2, \ldots.,
                                                                              \end{equation} we have \begin{equation*}
                                                                                                     \mathbb{E}[\widehat{Z_{f_{n}}}] \longrightarrow dd^c V_{K, q}
                                                                                                   \end{equation*} in the weak* topology of currents when the degree $n\rightarrow \infty$.
\end{cor}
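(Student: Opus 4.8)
The plan is to reduce the statement entirely to the deterministic $L^1_{loc}$-convergence already proved in Proposition \ref{Locc1}, using the computation behind Lemma \ref{expect} to kill the random part of the expected current. The whole content of the corollary is that the Gaussian and Fubini–Study cases automatically fall under the deterministic machinery of Section \ref{s2}, with the summability/moment bookkeeping of Section \ref{s4} no longer needed.

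First I would record the shape of the expected current. By the expression (\ref{excurr}) the expected current splits as $\tfrac{1}{2n}dd^c\log\Gamma_n$ plus the $dd^c$ of the $z$-average $\int \log|\langle a^{(n)},\beta^{(n)}(z)\rangle|\,d\mu_n$. The point of Lemma \ref{expect} is that, by the unitary invariance of both the Gaussian (\ref{gaussme}) and the Fubini–Study (\ref{fubstudy}) measures (here one applies (\ref{gauss1}) and (\ref{gauss2}) with $\alpha=1$ to the unit vectors $\beta^{(n)}(z)$), this average is a constant $D_0$ independent of $z$, so its $dd^c$ vanishes. Hence
\[
\mathbb{E}[\widehat{Z_{f_n}}] = \frac{1}{2n}\,dd^c\log\Gamma_n,
\]
and proving the corollary reduces to showing $\tfrac{1}{2n}\,dd^c\log\Gamma_n \to dd^c V_{K,q}$ in the weak* topology.

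Next I would invoke Proposition \ref{Locc1}: the hypothesis of the corollary is exactly the countable-dense-set condition required there, so together with the upper bound (\ref{siza4}) it yields $\tfrac{1}{2n}\log\Gamma_n \to V_{K,q}$ in $L^1_{loc}(\mathbb{C}^m)$. To transfer this potential-level convergence through $dd^c$, I fix a test form $\varphi\in\mathcal{D}^{m-1,m-1}_{\mathbb{R}}(\mathbb{C}^m)$ and integrate by parts, writing
\[
\Big\langle \tfrac{1}{2n}dd^c\log\Gamma_n,\ \varphi\Big\rangle \;=\; \frac{1}{2n}\int_{\mathbb{C}^m}\log\Gamma_n\; dd^c\varphi .
\]
Since $dd^c\varphi$ is a smooth form with compact support, the $L^1_{loc}$-convergence of $\tfrac{1}{2n}\log\Gamma_n$ tested against its (fixed, bounded, compactly supported) coefficients shows this tends to $\int_{\mathbb{C}^m} V_{K,q}\, dd^c\varphi = \langle dd^c V_{K,q},\varphi\rangle$, which is the asserted convergence.

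There is essentially no serious obstacle here: the corollary is a direct consequence of Lemma \ref{expect} and Proposition \ref{Locc1}. The only step demanding a word of care is the last passage, namely that $L^1_{loc}$-convergence of the potentials survives the application of $dd^c$; this is standard for currents because $dd^c$ is continuous for the weak* topology (one simply moves the two derivatives onto the fixed test form), and it is the single place where that continuity is used. I would also note that, in contrast to Theorem \ref{equidc1}, no summability condition on $D_n$ intervenes, since for these two measures the constant is the $n$-independent $D_0$ and the conclusion is only about the expectation, not almost-sure behavior.
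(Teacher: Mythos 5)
Your proposal is correct and follows essentially the same route as the paper: the corollary is obtained by combining Lemma \ref{expect} (the unitary-invariance computation that reduces $\mathbb{E}[\widehat{Z_{f_n}}]$ to $\tfrac{1}{2n}dd^c\log\Gamma_n$) with the deterministic $L^1_{loc}$-convergence of Proposition \ref{Locc1}, and then passing $dd^c$ onto the test form. The only difference is that you spell out the final weak*-convergence step explicitly, which the paper leaves implicit; incidentally, your normalization $\tfrac{1}{2n}$ is the correct one (the paper's statement of Lemma \ref{expect} has a harmless typo writing $\tfrac{1}{n}$).
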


By Lemma \ref{expect}, the first term in (\ref{cordec}) will be cancelled by the second term of the variance. As for the second and the third terms in the expansion (\ref{cordec}), following exactly the same argument in the proof of Lemma \ref{expect}, one can easily see that they both are zero and so we are down to the fourth term to estimate, that is the term (\ref{cordec1}). Let us denote the triple integral related to (\ref{cordec1}) by $B_{0}$. By using the Cauchy-Schwarz inequality in (\ref{varterm3}), we can bound $B_{0}$ from above as follows with $\alpha=\theta=2$ \begin{equation}\label{csvar}
   B_{0} \leq \int_{\mathbb{C}^{m}}\int_{\mathbb{C}^{m}}{dd^{c}\varphi(z) dd^{c}\varphi(w)}\frac{1}{n^{2}} \Big\{ \int_{\mathbb{C}^{d_{n}}}{|\log{|\langle a^{(n)}, \beta^{(n)}(z) \rangle|}|^{2}d\mu_{n}(a^{(n)})}\Big\} \leq \frac{1}{n^{2}}\,C^{2}_{\varphi}\,D_{0},
\end{equation}where $D_{0}$  is as in (\ref{gauss1}) and (\ref{gauss2}) with $\alpha=2$. We have thus proved the following.

\begin{thm}\label{vargafub}
Let $K\subset \mathbb{C}^{m}$ and $q$ be as before. Let $(\mathcal{P}_{n}, \mu_{n})$ be either the Gaussian or the Fubini-Study probability space. For a test form $\varphi \in \mathcal{D}^{m-1, m-1}_{\mathbb{R}}(\mathbb{C}^{m})$, if $f_{n}$ in (\ref{gafurep}) is given, we have

\begin{equation}\label{varison}
   \mathrm{Var}{\langle [\widehat{Z_{f_{n}}}], \varphi \rangle} =O(\frac{1}{n^2}).
\end{equation}
\end{thm}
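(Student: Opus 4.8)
The plan is to start from the variance decomposition already obtained under the general moment condition and then exploit the unitary invariance of the two measures to collapse all but one term. Recall from the proof of Theorem \ref{main} that, writing $\mathbb{E}\langle [\widehat{Z_{f_n}}],\varphi\rangle^{2}=I_{1}+2I_{2}+I_{3}$ and $(\mathbb{E}\langle [\widehat{Z_{f_n}}],\varphi\rangle)^{2}=J_{1}+2J_{2}+J_{3}$, one has $I_{1}=J_{1}$ and $I_{2}=J_{2}$, so that $\mathrm{Var}\langle [\widehat{Z_{f_n}}],\varphi\rangle=I_{3}-J_{3}$. First I would observe that in the Gaussian and Fubini--Study cases this simplifies drastically: by the computation in the proof of Lemma \ref{expect}, the inner integral $\int \log|\langle a^{(n)},\beta^{(n)}(z)\rangle|\,d\mu_{n}$ is a constant independent of $z$ (by unitary invariance), and since $\int_{\mathbb{C}^{m}}dd^{c}\varphi=0$ by Stokes, every contribution carrying a single such factor integrates to zero. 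In particular $J_{3}=0$, and the two cross terms in (\ref{cordec}) vanish as well, leaving only the fourth integrand (\ref{cordec1}), whose triple integral I denote $B_{0}=I_{3}$.

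Next I would estimate $B_{0}$ directly. Applying the Cauchy--Schwarz inequality, that is Hölder with $\alpha=\theta=2$, to the inner integral over $a^{(n)}$ exactly as in (\ref{varterm3})--(\ref{csvar}) bounds the $a$-integral by
\[
\Big(\int_{\mathbb{C}^{d_{n}}}|\log|\langle a^{(n)},\beta^{(n)}(z)\rangle||^{2}\,d\mu_{n}\Big)^{1/2}\Big(\int_{\mathbb{C}^{d_{n}}}|\log|\langle a^{(n)},\beta^{(n)}(w)\rangle||^{2}\,d\mu_{n}\Big)^{1/2}.
\]
The decisive step is that, by the unitary invariance recorded in (\ref{gauss1}) and (\ref{gauss2}) with $\alpha=2$, each of these integrals equals the same universal constant $D_{0}$ --- a one-dimensional integral, $2\int_{0}^{\infty}r|\log r|^{2}e^{-r^{2}}\,dr$ in the Gaussian case and $2\int_{0}^{\infty}r|\log r|^{2}(1+r^{2})^{-2}\,dr$ in the Fubini--Study case --- which is finite and, crucially, independent of both $n$ and the unit vectors $\beta^{(n)}(z),\beta^{(n)}(w)$.

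Finally I would absorb the remaining integration against the test form: bounding $\int_{\mathbb{C}^{m}}\int_{\mathbb{C}^{m}}|dd^{c}\varphi(z)|\,|dd^{c}\varphi(w)|$ by $C_{\varphi}^{2}$ (the square of the total variation of $dd^{c}\varphi$, as in (\ref{secondterm})) yields $B_{0}\le C_{\varphi}^{2}D_{0}/n^{2}$, whence $\mathrm{Var}\langle [\widehat{Z_{f_n}}],\varphi\rangle=I_{3}-J_{3}=B_{0}=O(1/n^{2})$. The only genuine obstacle is the uniformity in $n$: in the general setting of Theorem \ref{main} the corresponding integral is controlled only by $D_{n}=o(n^{\alpha})$, which may grow, whereas here unitary invariance reduces the $d_{n}$-dimensional integral to a fixed one-dimensional one, so $D_{0}$ is a true constant. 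Verifying the convergence of these one-dimensional integrals near $r=0$, where $r|\log r|^{2}$ is integrable, and at infinity, where the Gaussian factor and the rational decay $(1+r^{2})^{-2}$ respectively dominate, is then routine.
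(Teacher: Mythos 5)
Your proposal is correct and follows essentially the same route as the paper: identify $\mathrm{Var}=I_{3}-J_{3}$, use unitary invariance of the Gaussian and Fubini--Study measures to make every single-logarithm inner integral a constant (so that, after integrating against $dd^{c}\varphi$, which has vanishing total integral by Stokes, $J_{3}$ and the cross terms in (\ref{cordec}) all vanish), and then bound the remaining term $B_{0}=I_{3}$ by Cauchy--Schwarz with the $n$-independent constant $D_{0}$ from (\ref{gauss1})--(\ref{gauss2}) with $\alpha=2$. Your write-up in fact spells out a couple of steps the paper leaves implicit (the Stokes argument for the vanishing of $J_{3}$ and the convergence of the one-dimensional integrals), but the decomposition, the key use of unitary invariance, and the final estimate $B_{0}\le C_{\varphi}^{2}D_{0}/n^{2}$ coincide with the paper's argument.
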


By the same method in Theorem \ref{equidc1} and combining Corollary \ref{expg} and Theorem \ref{vargafub}, we have the equidistribution theorem in these particular probability spaces. Notice that we do not need a summability condition anymore because of (\ref{varison}).

\begin{thm}\label{gafubeq} Under the same conditions of Corollary \ref{expg}, for $\mu_{\infty}$-almost every sequence $\textbf{f}=\{f_{n}\}_{n=1}^{\infty}$, the following holds
  \begin{equation*}
      [Z_{f_{n}}] \longrightarrow dd^c V_{K, q}
      \end{equation*}in the weak* topology of currents when $n\rightarrow \infty$.
\end{thm}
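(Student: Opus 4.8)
The plan is to transcribe the Beppo--Levi argument of Theorem \ref{equidc1}, observing that in the present Gaussian/Fubini--Study setting its summability hypothesis is now automatic. Indeed, Theorem \ref{vargafub} gives $\mathrm{Var}\langle[\widehat{Z_{f_{n}}}],\varphi\rangle=O(1/n^{2})$, so $\sum_{n}\mathrm{Var}\langle[\widehat{Z_{f_{n}}}],\varphi\rangle<\infty$ unconditionally, while Corollary \ref{expg} already supplies the convergence of means $\mathbb{E}[\widehat{Z_{f_{n}}}]\to dd^{c}V_{K,q}$. These two inputs are exactly what the proof of Theorem \ref{equidc1} consumes, so no extra probabilistic hypothesis (in particular no summability condition) is needed.

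Concretely, I would fix $\varphi\in\mathcal{D}^{m-1,m-1}_{\mathbb{R}}(\mathbb{C}^{m})$ and set
$$X_{n}(\textbf{f}):=\big(\langle[\widehat{Z_{f_{n}}}]-\mathbb{E}[\widehat{Z_{f_{n}}}],\varphi\rangle\big)^{2}\geq 0,$$
so that $\int_{\mathcal{P}_{\infty}}X_{n}\,d\mu_{\infty}=\mathrm{Var}\langle[\widehat{Z_{f_{n}}}],\varphi\rangle$. By Theorem \ref{vargafub} the series $\sum_{n}\int_{\mathcal{P}_{\infty}}X_{n}\,d\mu_{\infty}$ converges; the Beppo--Levi theorem then permits interchanging sum and integral, yielding $\sum_{n}X_{n}(\textbf{f})<\infty$ for $\mu_{\infty}$-almost every $\textbf{f}$, whence $X_{n}(\textbf{f})\to 0$ and therefore $\langle[\widehat{Z_{f_{n}}}],\varphi\rangle-\mathbb{E}\langle[\widehat{Z_{f_{n}}}],\varphi\rangle\to 0$ almost surely. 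Feeding in the deterministic limit from Corollary \ref{expg} gives $\langle[\widehat{Z_{f_{n}}}],\varphi\rangle\to\langle dd^{c}V_{K,q},\varphi\rangle$ almost surely.

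The one step that genuinely requires care---the same one implicit in Theorem \ref{equidc1}---is that the full-measure event just produced depends on the chosen $\varphi$, whereas weak* convergence of currents demands a single null set valid for all test forms simultaneously. I would fix a countable family $\{\varphi_{k}\}$ dense in $\mathcal{D}^{m-1,m-1}_{\mathbb{R}}(\mathbb{C}^{m})$ (for the natural $\mathscr{C}^{\infty}$ topology relative to a fixed exhaustion by balls), run the previous paragraph for each $\varphi_{k}$, and intersect the countably many resulting full-measure events. On that single event one has $\langle[\widehat{Z_{f_{n}}}],\varphi_{k}\rangle\to\langle dd^{c}V_{K,q},\varphi_{k}\rangle$ for every $k$ at once; to pass to an arbitrary $\varphi$ I need a uniform-in-$n$ bound on the mass of $[\widehat{Z_{f_{n}}}]$ over compacta, which I would obtain almost surely by applying the same conclusion to a fixed test form $\psi$ dominating the standard K\"ahler form on each ball (so that $\langle[\widehat{Z_{f_{n}}}],\psi\rangle$ is a.s. bounded). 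With this mass bound in hand, a routine three-term estimate that splits $\varphi$ into a nearby $\varphi_{k}$ plus a small remainder extends the convergence from the dense family to every test form, which is precisely the asserted weak* convergence. This density/mass-bound step is the only delicate point; everything else is a direct transcription of the proof of Theorem \ref{equidc1}.
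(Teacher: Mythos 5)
Your proposal is correct and follows exactly the route the paper takes: it runs the Beppo--Levi argument of Theorem \ref{equidc1} with the summability hypothesis supplied automatically by the $O(1/n^{2})$ variance bound of Theorem \ref{vargafub}, and feeds in the convergence of expectations from Corollary \ref{expg}. Your extra care in producing a single null set valid for all test forms (countable dense family plus an almost-sure mass bound) addresses a point the paper leaves implicit but does not change the method.
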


\begin{rem}
For other measures, such as locally moderate probability measures, probability measures with heavy tail and small ball probability, area measures of sphere the constants $D_{n}$ in (\ref{moment}) result in upper bounds relying on the degree $n$. For exposition of all these cases, one can look at the papers \cite{BG} and \cite{BCM}. We examine one more special case where the constants $D_{n}$ depend on $n$: Random polynomials with i.i.d. coefficients with a bounded probability distribution having a logarithmically decreasing tail estimate. For our purposes in this paper, we alloted the next subsection to this circumstance.

\end{rem}

\subsection{Random Holomorphic Polynomials with i.i.d. coefficients} \label{64}
Consider the random polynomials \begin{equation}\label{gafurep}f_{n}(z)=\sum_{j=1}^{d_{n}}{a_{j}^{(n)}p_{nj}(z)} \in \mathcal{P}_{n}\end{equation}
where $\{p_{nj}\}^{d_{n}}_{j=1}$ is a fixed general basis,  $\mu_{n}$ on the polynomial space $\mathcal{P}_{n}$ is, this time, induced by the probability distribution law $\mathbf{P}$ of the i.i.d. random coefficients $a^{n}_{j}$ in the representation (\ref{repch}) with a density $\varphi: \mathbb{C} \rightarrow [0, N]$ satisfying the property that there are constants $\delta>0$ and $\gamma >2m$ such that \begin{equation}\label{logtail}\mathbf{P}(\{z\in \mathbb{C}: \log|z|>R\})\leq \frac{\delta}{R^{\gamma}},\,\,\,\forall R\geq 1.\end{equation} This kind of density was studied in \cite{Bay16} and \cite{BCM}. This choice of probability distribution includes real or complex Gaussian distributions. The authors in \cite{BCM} (Lemma 4.15 there) show that the  measures $\mu_{n}$ verify the moment condition (\ref{moment}) with the upper bound \begin{equation}\label{dnmom} B d_{n}^{\alpha/ \gamma}\end{equation} ($B=B(N, \alpha, \gamma, \delta)$)\,for any constant $\alpha$ with $1\leq \alpha < \gamma$, which gives us that, under this setting, with the ideas we use in the previous section, the analogoues of Lemma \ref{expw}, Theorem \ref{main} and Theorem \ref{equidc1} can be seen to be true for asymptotically Bernstein-Markov probability measures. A similar probability distribution function was also considered by Bloom and Levenberg, see \cite{BL15} for further details.

  Since $d_{n}=\binom{n+m}{m}$, one can find a constant $C>0$ such that $d_{n} \leq C\,n^{m}$, so, by (\ref{dnmom}) and the inequality  $((d_{n})^{\alpha/\gamma})^{2/\alpha} \leq (C\,n^{m})^{\alpha/\gamma})^{2/\alpha}= C^{2/ \theta} n^{2m/ \gamma}$, we get $(B d^{\alpha/\gamma}_{n})^{2/\alpha} \leq B^{2 / \alpha} C^{2/ \gamma} n^{2m/\gamma}$. Now, by writing $D_{n}:=B (C n^{m})^{\alpha/\gamma}$, we see that \begin{equation}\label{sumi}\sum_{n=1}^{\infty}{\frac{D_{n}^{2/\alpha}}{n^{2}}} < \infty\end{equation} because $\gamma >2m$. We deduce an equidistribution result for codimension $1$.

  \begin{thm}
Let $K\subset \mathbb{C}^{m}$ be a locally regular compact set and $q$ a weight function on $K$. According to the above framework, suppose that there is a countable dense set $\{z_{s}\}$ in $\mathbb{C}^{m}$ such that \begin{equation}\label{count}
                                                                                \lim_{n\rightarrow \infty}{\frac{1}{2n}\log{\Gamma_{n}(z_{s})}}=V_{K, q}(z_{s}) \,\,\,\,s=1, 2, \ldots.,
                                                                              \end{equation}  we have, for $\mu_{\infty}$-almost every sequence $\textbf{f}=\{f_{n}\}_{n=1}^{\infty}$, the following holds \begin{equation}\label{equidc}
                                                      [\widehat{Z_{f_{n}}}] \longrightarrow dd^{c} V_{K}
                                                    \end{equation}in the weak* topology of currents as $n\rightarrow \infty$, where $k=1, 2, \ldots, m$.
\end{thm}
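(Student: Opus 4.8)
The plan is to recognize this statement as a direct specialization of Theorem~\ref{equidc1} to the probabilistic model of Subsection~\ref{64}, so that the whole task reduces to checking that the two hypotheses of that theorem---the moment condition (\ref{moment}) and the summability of $\sum_{n} D_{n}^{2/\alpha}/n^{2}$---are satisfied by i.i.d.\ coefficients with a logarithmically decaying tail. Everything analytically substantial has already been assembled earlier: the $L^{1}_{loc}$-convergence of $\frac{1}{2n}\log\Gamma_{n}$ (Proposition~\ref{Locc1}), the expected-distribution limit (Lemma~\ref{expw}), the variance bound (Theorem~\ref{main}), and the Borel--Cantelli/Beppo-Levi passage to almost-sure convergence (Theorem~\ref{equidc1}). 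So I would treat this as a corollary obtained by feeding explicit constants into the general machine.

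First I would invoke the moment verification recorded just above the statement: by \cite[Lemma 4.15]{BCM}, the product measures $\mu_{n}$ built from the tail estimate (\ref{logtail}) satisfy (\ref{moment}) with $D_{n}=B\,d_{n}^{\alpha/\gamma}$ for every $\alpha$ with $1\le \alpha<\gamma$. Since the variance estimate of Theorem~\ref{main} requires $\alpha\ge 2$, the point to observe is that $\gamma>2m\ge 2$ guarantees the interval $[2,\gamma)$ is nonempty, so one may fix an admissible $\alpha\ge 2$ that simultaneously keeps the tail moment finite and validates the variance bound.

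Next I would carry out the summability check explicitly. Using $d_{n}=\binom{n+m}{m}\le C n^{m}$ and setting $D_{n}:=B(Cn^{m})^{\alpha/\gamma}$, one gets $D_{n}^{2/\alpha}\le B^{2/\alpha}C^{2/\gamma} n^{2m/\gamma}$, whence
$$\frac{D_{n}^{2/\alpha}}{n^{2}}\le B^{2/\alpha}C^{2/\gamma}\,n^{2m/\gamma-2}.$$
Because $\gamma>2m$ forces $2m/\gamma<1$, the exponent $2m/\gamma-2$ is strictly below $-1$, so the series $\sum_{n}D_{n}^{2/\alpha}/n^{2}$ converges; this is precisely (\ref{sumi}), and notably the convergence is independent of which admissible $\alpha$ was chosen.

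With both hypotheses in hand, I would simply apply Theorem~\ref{equidc1}: under the countable dense set assumption (\ref{count}) together with the summability just established, that theorem yields, for $\mu_{\infty}$-almost every sequence $\textbf{f}=\{f_{n}\}_{n=1}^{\infty}$, the weak* convergence $[\widehat{Z_{f_{n}}}]\to dd^{c}V_{K,q}$ as $n\to\infty$. I do not anticipate any genuine obstacle here, since no new estimate is required beyond the explicit $D_{n}$; the only point demanding mild care is the bookkeeping that the single chosen $\alpha$ lies in $[2,\gamma)$, which $\gamma>2m\ge 2$ comfortably permits. (I would also flag that the conclusion should read $dd^{c}V_{K,q}$ rather than $dd^{c}V_{K}$ to match the weighted hypothesis, and that the trailing ``$k=1,2,\dots,m$'' is vestigial from the higher-codimension setting and is irrelevant in this codimension-one statement.)
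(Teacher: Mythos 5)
Your proposal is correct and follows essentially the same route as the paper: the theorem is deduced by verifying the moment condition (\ref{moment}) with $D_{n}=B\,d_{n}^{\alpha/\gamma}$ via \cite[Lemma 4.15]{BCM}, checking the summability $\sum_{n}D_{n}^{2/\alpha}/n^{2}<\infty$ from $d_{n}\leq Cn^{m}$ and $\gamma>2m$, and then invoking Theorem \ref{equidc1}. Your extra remark that one must fix $\alpha\in[2,\gamma)$ (nonempty since $\gamma>2m\geq 2$) to make the variance estimate applicable, and your flags about the typographical slips ($dd^{c}V_{K}$ versus $dd^{c}V_{K,q}$ and the vestigial ``$k=1,\ldots,m$''), are accurate points of bookkeeping that the paper leaves implicit.
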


\section{Global Equidistribution}\label{s7}

In this final section we will be in the orthogonal setting to prove a more general version of [\cite{Bay16}, Theorem 1.1]. Instead of making a distinction between codimension one and codimensions greater than one, we examine the case of codimension $k$ in a unified manner, with $k$ spanning the range from $1$ to $m$.

\subsection{Deterministic Setting}
\subsubsection{K\"{a}hler geometric preliminaries}

Let $(X, \omega)$ be a connected compact K\"{a}hler manifold with $\dim_{\mathbb{C}}{X}=m$.
On this manifold, a holomorphic line bundle $L$ is defined by compiling complex lines $\{L_{x}\}_{x\in X}$ and constructing a complex manifold of dimension $1 + \dim_{\mathbb{C}}{X}$ with a projection map
$\pi : L \rightarrow X $ such that $\pi$ that assigns each line (or fiber) $L_{x}$ to $x$ is holomorphic. By using an open cover $\{U_{\alpha}\}$ of $X$, we can always locally trivialize $L$ through biholomorphisms $\Psi_{\alpha}: \pi^{-1}(U_{\alpha}) \rightarrow U_{\alpha} \times \mathbb{C}$ which map $L_{x} = \pi^{-1}(x)$ isomorphically onto $\{x\} \times \mathbb{C}$. The line bundle $L$ is then uniquely (i.e., up to
isomorphism) determined by these transition functions $g_{\alpha \beta}$, which are non-vanishing holomorphic functions on $U_{\alpha \beta}:=U_{\alpha} \cap U_{\beta}$ defined by $g_{\alpha \beta}= \Psi_{\alpha} \circ \Psi^{-1}_{\beta}|_{\{x\} \times \mathbb{C}}$. These functions $g_{\alpha \beta}$ satisfy the cocycle condition $g_{\alpha \beta} g_{\beta \gamma} g_{\gamma \alpha}=1$.

The cocycle condition for the transition functions yields that they define a cohomology class, denoted as
$[{g_{\alpha \beta}}] \in H^{1}(X, \mathcal{O}^{*})$. Here, $H^{1}(X, \mathcal{O}^{*})$ is the first sheaf cohomology group of the manifold $X$ with coefficients in the sheaf of non-zero holomorphic functions, denoted by  $\mathcal{O}^{*}$.  The exponential short exact sequence $0 \rightarrow \mathbb{Z} \rightarrow \mathcal{O} \rightarrow \mathcal{O}^{*} \rightarrow 0$ produces a mapping  $c_{1}: H^{1}(X, O^*) \rightarrow H^{2}(X, \mathbb{Z})$. $c_{1}(L, h)$ is defined by the image of $[{g_{\alpha \beta}}]$ under this mapping.

Let us denote the set of all plurisubharmonic functions on $U_{\alpha}$ by $\text{PSH}(U_{\alpha})$. Let $(L, h)$ be a holomorphic line bundle over $X$, where $h$ is provided by a collection of functions $\{\varphi_{\alpha}\}$ such that for any holomorphic frame $e$ of $L$ over $U_{\alpha}$, $\varphi_{\alpha} \in \mathcal{C}^{\infty}(U_{\alpha}) \cap \text{PSH}(U_{\alpha})$. We call the metric $h$ positive and smooth and $(L, h)$ a \textit{positively curved} line bundle. Smooth metrics always exist but positive metrics do not.

Let $(L, h)$ be a positively curved line bundle, where $h=\{e^{-\varphi_{\alpha}}\}$. The curvature of $h$ on each $U_{\alpha}$ is given by $\Theta_{h}=dd^{c} \varphi_{\alpha}$. It is well-defined real closed form on $X$ due to the relation $dd^{c} \log{|g_{\alpha \beta}|}=0$ on $U_{\alpha}$. By de Rham's isomorphism theorem, this curvature form represents the image of the \textit{first Chern class of $L$} under the mapping $i: H^{2}(X, \mathbb{Z}) \rightarrow H^{2}(X, \mathbb{R})$ provided by the inclusion $i:\mathbb{Z} \rightarrow \mathbb{R}$.

We assume $c_{1}(L, h)= \omega$. This condition is known as the \textit{prequantization} and the line bundle $(L, h)$ is called a \textit{prequantum line bundle}. Note that $X$ is also a projective manifold by Kodaira's embedding theorem (\cite{Huy}).
We will be interested in the global holomorphic sections of tensor powers of a prequantum line bundle $L$, defined by $L^{\otimes n}:= L\otimes \ldots \otimes L$ and for briefly we will write $L^{n}$. A global holomorphic section $s$ of $L^{n}$ is a family $\{s_{\alpha}\}$ of holomorphic functions such that the compatibility conditions $s_{\alpha}=g_{\alpha \beta}^{n} s_{\beta}$ on the overlapping open sets $U_{\alpha \beta}$ hold. The set of all global holomorphic sections is a finite dimensional vector space, denoted by $H^{0}(X, L^{n})$ and we write $\dim{H^{0}(X, L^{n})}=d_{n}$. The metric $h_{n}$ on the tensor power $L^{n}$ is induced by $h$ and $h_{n}:=h^{\otimes n}$. Let $s\in H^{0}(X, L^{n})$. The norm of $s$ will be denoted by $\|s\|_{h_{n}}$, this norm is defined on $U_{\alpha}$ by \begin{equation*}
                           \|s(x)\|_{h_{n}}:= |s_{\alpha}(x)|e^{-n\varphi_{\alpha}(x)}.
                         \end{equation*}Compatibility conditions guarantee that this definition does not depend on $\alpha$.

A function $\phi$ is said to be \textit{$\omega$-upper semicontinuous} if ($\omega$-u.s.c.) if $\phi + \tau$ is upper semicontinuous for every local potential $\tau$ of $\omega=dd^{c}\tau$. Any $\omega$-u.s.c. function $\phi \in L^{1}(X, \mathbb{R} \cup \{-\infty\})$ is called an \textit{$\omega$-plurisubharmonic} ($\omega$-psh) if $\omega + dd^{c}\phi \geq 0$ in the sense of currents. We denote the set of all $\omega$-psh functions by $\text{PSH}(X, \omega)$. $\omega$-plurisubharmonic functions are the most important and fundamental tools of global pluripotential theory on compact K\"{a}hler manifolds, for a nice and detailed investigation of $\omega$-plurisubharmonic functions, we invite the reader to consult the paper \cite{GZ}.

 Given $s\in H^{0}(X, L^{n})$, the current of integration $[Z_{s}]$ is defined exactly the same as in the polynomial case. The Poincar\'{e}-Lelong formula is, \begin{equation*}\label{pole1}
   [Z_{s}]= n c_{1}(L, h) + dd^{c}\log{||s||_{h_{n}}}= n \omega + dd^{c}\log{||s||_{h_{n}}},
 \end{equation*}which is exactly the same as (\ref{pole}) locally. If we consider the normalized current of integration $\frac{1}{n}[Z_{s}]$, we see that $c_{1}(L, h)=\omega$ and $[Z_{s}]$ are in the same cohomology class by $dd^{c}$-lemma.

\subsubsection{Pluripotential theory on compact K\"{a}hler manifolds}
 We call a subset $K$ of $X$\ PSH$(X, \omega$)-\textit{pluripolar}, if there exist   $v \in \text{PSH}(X, \omega)$ such that $K \subset \{x\in X: v(x)=-\infty \}$. We only take non-pluripolar compact subsets of $X$ since PSH$(X, \omega$)-\textit{pluripolar} sets characterize locally pluripolar sets in $X$, in other words, any locally pluripolar set is PSH$(X, \omega$)-\textit{pluripolar}. This is, as we mentioned in the beginning, the Josefson's theorem in this global setting of compact K\"{a}hler manifolds (e.g., \cite{GZ}, Theorem 7.2). By this equivalence, we will take into account only non-pluripolar compact sets. Let $q: K \rightarrow \mathbb{R}$ be a continuous function, which we call a \textit{weight } function. In \cite{Bay16}, inspired by \cite{GZ}, the weighted version of pluricomplex Green function is defined by
 \begin{equation}\label{wge}
  V_{K, q}=\sup\{\varphi \in \text{PSH}(X, \omega): \varphi(x) \leq q(x) \,\,\text{on}\,\,K \}.
 \end{equation}
 The weighted global extremal function is defined as the upper semicontinuous regularization $V^{*}_{K, q}$. As in the case of $\mathbb{C}^{n}$, if $K \subset X$ is a locally regular compact set in $X$, then $V_{K, q}$ is continuous, namely, $V_{K, q}=V^{*}_{K, q}$, we refer the reader to the Subsection 2.4 of \cite{Bay16}. The following weighted version of the Siciak-Zakharyuta theorem was proved in \cite{Bay16} by using the arguments of the proof of [\cite{GZ}, Theorem 6.2] and the proof of [\cite{BL07}, Lemma 3.2].

 \begin{thm}[\cite{Bay16}, Theorem 2.7]\label{wsz}
 Let $K \subset X$ be a locally regular compact set and $q:K \rightarrow \mathbb{R}$ be a continuous weight function. Write $\Phi_{n}(x):=\sup{\{\|s(x)\|_{h_{n}}: s\in H^{0}(X, L^{n}), \max_{y\in K}{\|s(y)\|_{h_{n}} \,e^{-nq(y)}} \leq 1\}}$. Then \begin{align}\label{szlbb}
                                                                                                                          V_{K, q}(x) &=  \sup{\{\frac{1}{n} \log{\|s(x)\|_{h_{n}}}: s\in \bigcup_{n=1}^{\infty}{H^{0}(X, L^{n})}, \,\, \max_{y\in K}\|s(y)\|_{h_{n}} \,\,e^{-nq(y)} \leq 1\}}\\
                                                                                                                           &= \lim_{n\rightarrow \infty}{\frac{1}{n}\log{\Phi_{n}(x)}} \label{seclim}
                                                                                                                        \end{align}uniformly on $X$.
 \end{thm}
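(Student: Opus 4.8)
The plan is to denote by $S(x)$ the supremum on the right-hand side of (\ref{szlbb}), so that the claim is $V_{K,q}=S=\lim_n \tfrac1n\log\Phi_n$ uniformly, and to split the argument into an easy upper bound, a Fekete identification of the limit, a hard lower bound from $L^2$-estimates, and a Dini-type upgrade to uniformity. \emph{The easy inequality.} First I would record that for every $s\in H^0(X,L^n)$ the function $\tfrac1n\log\|s\|_{h_n}$ lies in $\mathrm{PSH}(X,\omega)$: writing $h=\{e^{-\varphi_\alpha}\}$ with $dd^c\varphi_\alpha=\omega$ and $\|s\|_{h_n}=|s_\alpha|e^{-n\varphi_\alpha}$ on $U_\alpha$, the Poincar\'{e}--Lelong formula gives $dd^c\log\|s\|_{h_n}=[Z_s]-n\omega\ge -n\omega$, hence $\omega+dd^c(\tfrac1n\log\|s\|_{h_n})\ge 0$. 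If moreover $\max_{y\in K}\|s(y)\|_{h_n}e^{-nq(y)}\le1$, then $\tfrac1n\log\|s\|_{h_n}\le q$ on $K$, so this function is admissible in (\ref{wge}) and therefore $\le V_{K,q}$. Taking the supremum over admissible $s$ yields $\tfrac1n\log\Phi_n\le S\le V_{K,q}$ pointwise on $X$.

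\emph{The limit equals the supremum.} If $s\in H^0(X,L^n)$ and $t\in H^0(X,L^{n'})$ are both admissible, then $s\otimes t\in H^0(X,L^{n+n'})$ is admissible and $\|(s\otimes t)(x)\|_{h_{n+n'}}=\|s(x)\|_{h_n}\,\|t(x)\|_{h_{n'}}$, whence $\Phi_{n+n'}\ge\Phi_n\Phi_{n'}$; that is, $n\mapsto\log\Phi_n(x)$ is superadditive (and bounded above by $nV_{K,q}(x)$). Fekete's lemma then gives $\lim_n\tfrac1n\log\Phi_n(x)=\sup_n\tfrac1n\log\Phi_n(x)=S(x)$ for each $x$, which already identifies the two right-hand sides of the theorem as the single pointwise function $S$.

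\emph{The hard inequality and the main obstacle.} It remains to prove $V_{K,q}\le S$, for which I would use an $L^2$-construction of peak sections in the spirit of [\cite{GZ}, Theorem 6.2] and [\cite{BL07}, Lemma 3.2]. Fix $x_0\in X$ and $\varepsilon>0$; the goal is to produce, for all large $n$, an admissible $s_n\in H^0(X,L^n)$ with $\tfrac1n\log\|s_n(x_0)\|_{h_n}\ge V_{K,q}(x_0)-\varepsilon$. In a coordinate chart and local frame near $x_0$ I would take a cutoff of a local model section whose norm is comparable to $e^{nV_{K,q}}$, and correct the resulting smooth section to a global holomorphic one by solving a $\bar\partial$-equation with the singular weight $nV^{*}_{K,q}$ augmented by a logarithmic pole at $x_0$. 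The positivity $c_1(L,h)=\omega>0$ together with $\omega+dd^cV_{K,q}\ge0$ furnishes the curvature positivity required by the H\"{o}rmander/Ohsawa--Takegoshi estimates, the pole forcing $s_n(x_0)\ne0$ of the correct size while the global $L^2$-bound controls the total mass. Passing from this $L^2$-bound to a pointwise bound on $K$ through the sub-mean-value property of the plurisubharmonic local representatives of $s_n$, the local regularity of $K$ and the continuity of $q$ and $V_{K,q}$, one obtains $\|s_n(y)\|_{h_n}e^{-nq(y)}\le n^{O(1)}$ on $K$; after normalization the polynomial factor is absorbed into $\tfrac1n\log(\cdot)$, giving $\tfrac1n\log\Phi_n(x_0)\ge V_{K,q}(x_0)-2\varepsilon$ for $n$ large. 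This $L^2$-step---the choice of weight guaranteeing simultaneously the prescribed value at $x_0$ and the sup-control on $K$, together with the $L^2$-to-sup passage with only subexponential loss---is the part I expect to be the main difficulty.

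\emph{Uniformity.} Finally I would upgrade pointwise convergence to uniform convergence. The regularizations $u_n:=(\tfrac1n\log\Phi_n)^{*}\in\mathrm{PSH}(X,\omega)$ are uniformly bounded, dominated by the continuous function $V_{K,q}$, and, by superadditivity, increasing along the subsequence $n=2^k$ to the pointwise limit $V_{K,q}$. Since $V_{K,q}-u_{2^k}$ is lower semicontinuous, nonnegative and decreases to $0$ pointwise on the compact manifold $X$, a Dini-type argument yields uniform convergence along this subsequence, and superadditivity together with the uniform upper bound $u_n\le V_{K,q}$ then propagates it to the full sequence. This establishes $V_{K,q}=S=\lim_n\tfrac1n\log\Phi_n$ uniformly on $X$.
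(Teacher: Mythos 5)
The paper contains no proof of this statement: it is imported verbatim from \cite{Bay16} (Theorem 2.7 there), with the remark that the proof follows the arguments of \cite{GZ}, Theorem 6.2, and \cite{BL07}, Lemma 3.2. Your outline --- the elementary bound $\tfrac1n\log\|s\|_{h_n}\in\mathrm{PSH}(X,\omega)$ via Poincar\'e--Lelong, submultiplicativity of $\Phi_n$ and Fekete's lemma to identify the limit with the supremum, H\"ormander/Ohsawa--Takegoshi peak sections for the hard lower bound, and a Dini argument for uniformity --- is exactly that circle of ideas and so matches the intended proof; the one technical point to watch in your $L^2$ step is that $n(\omega+dd^cV_{K,q})$ is only semipositive, so you must reserve a fixed tensor power of $L$ (equivalently a small multiple of $\omega$) to obtain the strict curvature positivity that the $\bar\partial$-estimate requires.
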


For a basis $\{s_{nj}\}_{j=1}^{d_{n}}$ of $H^{0}(X, L^{n})$ with $$\max_{y\in K}{\|s_{nj}(y)\|_{h_{n}} \,e^{-nq(y)}} \leq 1,$$ as we have done for polynomials in Subsection \ref{s2}, we define the Bergman-type functions \begin{equation}\label{bts}
  \Gamma_{n}(x)=\sum_{j=1}^{d_{n}}{\|s_{nj}(x)\|^{2}_{h_{n}}}.
\end{equation}By Theorem \ref{wsz}, we get \begin{equation}\label{1982}\Gamma_{n}(x) \leq d_{n} (\Phi_{n}(x))^{2} \end{equation}and  $\|s_{nj}(x)\|_{h_{n}} \leq e^{n V_{K, q}(x)}$. It follows from the latter that \begin{equation*}\label{sver}
       \Gamma_{n}(x) \leq d_{n} e^{2n V_{K, q}(x)}.
     \end{equation*}By taking the logarithm of both sides and dividing by $n$ across the inequality, we have \begin{equation}\label{sver1}
     \frac{1}{n} \log{\Gamma_{n}(x)} \leq \frac{1}{n} \log{d_{n}} + 2V_{K, q}(x).
      \end{equation} Since $L$ is a positive line bundle on
the compact Kähler manifold $X$, by, for instance, [\cite{LO}, p.430], there exist constants
$B_{1}, B_{2} > 0$ such that \begin{equation} \label{big1}B_{1} n^{m} \leq d_{n} \leq B_{2} n^{m}. \end{equation}

 Let $K\subset X$ be a locally regular compact set and $q$ a continuous real-valued weight function on $K$. From this point onward, we follow \cite{BBL} to define the concept of a sequence of asymptotically Bernstein-Markov measures in the line bundle setting and prove an analogue of [\cite{BBL}, Proposition 2.8]. Let $\{\sigma_{n}\}_{n\geq 1}^{\infty}$ be a sequence of probability measures on $K$. Define the following inner product on $H^{0}(X, L^{n})$ \begin{equation}\label{ipa}
  \langle s_{1}, s_{2} \rangle_{L^{2}(e^{-2n\,q} \sigma_{n})}:= \int_{K}{\langle s_{1}, s_{2}\rangle_{h_{n}} e^{-2n \,q}d\sigma_{n}}.
\end{equation}for any $s_{1}, s_{2} \in H^{0}(X, L^{n})$, and (\ref{ipa}) induces the following norm on $H^{0}(X, L^{n})$: $$\|s\|_{L^{2}(e^{-2n\,q} \sigma_{n})}^{2}:= \int_{K}{\|s\|^{2}_{h_{n}} e^{-2n\,q} d\sigma_{n}}.$$ Let $R_{n}$ be the smallest positive constant so that
\begin{equation}\label{abm}
\max_{x\in K}{\|s(x)\|_{h_{n}} \,e^{-nq(x)}} \leq R_{n} \|s\|_{L^{2}(e^{-2n\,q} \sigma_{n})}=R_{n}\|s\,e^{-n\,q}\|_{L^{2}(\sigma_{n})}
\end{equation}for any $s\in H^{0}(X, L^{p})$. We take and fix an $L^{2}(e^{-2n \,q} \sigma_{n})$-orthogonal basis                                                                                                  $\{S_{nl}\}^{d_{n}}_{l=1}$ for  $H^{0}(X, L^{n})$ with  $\max_{x\in K}{\|S_{nl}(x)\|_{h_{n}} e^{-nq(x)}} = 1, \,l=1, \ldots, d_{n}$. Let $s \in H^{0}(X, L^{n})$ with $\max_{x\in K}{\|s(x)\|_{h_{n}} e^{-nq(x)}} \leq 1$. By considering the representation \begin{equation*}\label{repsec}
s(x)=\sum_{l=1}^{d_{n}}{c_{l}\frac{S_{nl}(x)}{\|S_{nl}\|_{L^{2}(e^{-2n \,q} \sigma_{n})}}},
 \end{equation*}we have \begin{equation*}\label{l2or}
                         \|s(x)\|^{2}_{h_{n}} \leq \Big(\sum_{l=1}^{d_{n}}{|c_{l}|^{2}} \Big) \Big( \sum_{l=1}^{d_{n}}{\frac{\|S_{nl}(x)\|_{h_{n}}^{2}}{\|S_{nl}\|^{2}_{L^{2}(e^{-2n \,q} \sigma_{n})}}} \Big)= \|s\|^{2}_{L^{2}(e^{-2n\,q} \sigma_{n})}\, \Big( \sum_{l=1}^{d_{n}}{\frac{\|S_{nl}(x)\|_{h_{n}}^{2}}{\|S_{nl}\|^{2}_{L^{2}(e^{-2n \,q} \sigma_{n})}}} \Big).
                       \end{equation*} Since $\|S_{nl}\|_{L^{2}(e^{-2n \,q} \sigma_{n})} \geq \frac{1}{R_{n}}$ by (\ref{abm}) and $\|s\|_{L^{2}(e^{-2n \,q} \sigma_{n})}^{2}  \leq \max_{x\in K}{\|s(x)\|_{h_{n}} e^{-nq(x)}} \leq 1$, it follows that
                       \begin{equation*}\label{latebb}
                          \|s(x)\|^{2}_{h_{n}} \leq R^{2}_{n} \Gamma_{n}(x).
                       \end{equation*} By taking the supremum for all $s \in H^{0}(X, L^{n})$ with the above properties and using $\Phi_{n}(x)=\sup{\{\|s(x)\|_{h_{n}}: s\in H^{0}(X, L^{n}), \max_{x\in K}{\|s_{nj}(x)\|_{h_{n}} \,e^{-nq(x)}} \leq 1\}}$, one gets \begin{equation}\label{szlde}
                        (\Phi_{n}(x))^{2} \leq R^{2}_{n} \Gamma_{n}(x).
                         \end{equation}

                       \begin{defn}
                       A sequence $\{\sigma_{n}\}_{n \geq 1}$ of probability measures on $K$ is said to be asymptotically weighted Bernstein-Markov for $K$ and $q$ if for all $n \geq 1$, $\max_{x\in K}{\|s(x)\|_{h_{n}} \,e^{-nq(x)}} \leq R_{n} \|s e^{-nq}\|_{L^{2}(\sigma_{n})}$ for all $s\in H^{0}(X, L^{n})$ with $\lim_{n\rightarrow \infty}{R^{\frac{1}{n}}_{n}}=1$.
                         \end{defn}
                        Then, by (\ref{seclim}), (\ref{1982}), (\ref{big1}) and (\ref{szlde}), we have proved the following   \begin{prop} \label{szlims}Let $K \subset X$ be a locally regular compact set, $q: K \rightarrow \mathbb{R}$ a continuous weight function. Let $\{\sigma_{n}\}_{n \geq 1}$ be a sequence of asymptotically weighted Bernstein-Markov measures for $K$ and $q$. Let $\{S_{nl}\}^{d_{n}}_{l=1}$ be an $L^{2}(\sigma_{n} e^{-2n\,q})$-orthogonal basis with $\max_{x\in K}{\|S_{nl}(x)\|_{h_{n}} e^{-nq(x)}}=1$ and $\Gamma_{n}(x)=\sum_{j=1}^{d_{n}}{\|S_{nj}(x)\|^{2}_{h_{n}}}$. Then
                          \begin{equation}\label{L2limsz}
                          \lim_{n\rightarrow \infty}{\frac{1}{2n} \log{\Gamma_{n}(x)}}=V_{K, q}(x)
                          \end{equation}uniformly on $X$.  \end{prop}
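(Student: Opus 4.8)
The plan is to obtain (\ref{L2limsz}) by squeezing $\tfrac{1}{2n}\log\Gamma_n$ between two quantities that both converge uniformly to $V_{K,q}$, using the two-sided bracketing of $\Gamma_n$ by the Siciak-Zakharyuta function $\Phi_n$ that is already available. The inequalities (\ref{1982}) and (\ref{szlde}) combine to give, for every $x\in X$,
\[
\frac{(\Phi_n(x))^2}{R_n^2}\;\leq\;\Gamma_n(x)\;\leq\;d_n\,(\Phi_n(x))^2 .
\]
First I would apply $\tfrac{1}{2n}\log(\cdot)$ across this chain. The lower bound becomes $\tfrac{1}{n}\log\Phi_n(x)-\tfrac{1}{n}\log R_n$ and the upper bound becomes $\tfrac{1}{n}\log\Phi_n(x)+\tfrac{1}{2n}\log d_n$, so that $\tfrac{1}{2n}\log\Gamma_n(x)$ is trapped between two functions each of which equals $\tfrac{1}{n}\log\Phi_n(x)$ up to an additive term that does \emph{not} depend on $x$.

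Next I would dispose of these two $x$-independent correction terms. The asymptotically weighted Bernstein-Markov hypothesis is precisely $\lim_{n\to\infty}R_n^{1/n}=1$, equivalently $\tfrac{1}{n}\log R_n\to 0$. For the other term, (\ref{big1}) gives $d_n\leq B_2 n^m$, whence $\tfrac{1}{2n}\log d_n\leq \tfrac{1}{2n}(\log B_2+m\log n)\to 0$. Thus both bracketing functions differ from $\tfrac{1}{n}\log\Phi_n$ by a constant (in $x$) that vanishes as $n\to\infty$. Combined with the uniform convergence $\tfrac{1}{n}\log\Phi_n\to V_{K,q}$ on $X$ furnished by (\ref{seclim}), both the upper and lower bounds converge uniformly on $X$ to $V_{K,q}$, and the squeeze delivers (\ref{L2limsz}) uniformly.

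I do not expect any genuine analytic obstacle here: the substance of the result sits entirely in the previously established facts (\ref{seclim}), (\ref{1982}), (\ref{big1}) and (\ref{szlde}), and this proposition merely assembles them. The only point deserving attention is the \emph{uniformity} of the limit, and it is handled automatically by the structure of the estimates: the Bernstein-Markov constant $R_n$ (subexponential by the defining condition) and the dimension $d_n$ (polynomial by (\ref{big1})) enter as factors that are constant in $x$, so after taking $\tfrac{1}{2n}\log$ their contribution is a uniform shift killed by the $1/n$ normalization, leaving the uniform convergence of $\tfrac{1}{n}\log\Phi_n$ to govern the outcome. (One also notes that the logarithms are legitimate since the finiteness of the uniform limit in (\ref{seclim}) forces $\Phi_n>0$, and hence $\Gamma_n>0$ via (\ref{szlde}), on $X$ for large $n$.)
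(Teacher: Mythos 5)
Your proposal is correct and follows exactly the route the paper intends: the paper's "proof" is precisely the one-line assembly of (\ref{seclim}), (\ref{1982}), (\ref{big1}) and (\ref{szlde}), and you have simply written out the squeeze $\frac{(\Phi_n)^2}{R_n^2}\leq\Gamma_n\leq d_n(\Phi_n)^2$ together with the observations that $\frac{1}{n}\log R_n\to 0$ and $\frac{1}{2n}\log d_n\to 0$ uniformly in $x$. No discrepancy with the paper's argument.
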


\subsection{Randomization}
 We shall be interested in the simultaneous zero locus \begin{equation}\label{zds}
 Z_{s^{1}_{n}, \ldots, s^{k}_{n}}:=\{x\in X: s^{1}_{n}(x)=\ldots=s^{k}_{n}(x)=0 \}
 \end{equation} and here \begin{equation}\label{indv}
  s^{j}_{n}=\sum_{l=1}^{d_{n}}{a^{j}_{nl}\,S_{nl}}\in H^{0}(X, L^{n}), \,\,\,j=1, 2, \ldots, k.\end{equation}

We randomize the spaces $H^{0}(X, L^{n})$, which is identified by $\mathbb{C}^{d_{n}}$ as in the previous sections, with the probability measures $\mu_{n}$ that satisfy the moment condition (\ref{moment}) and that do not charge pluripolar sets in $H^{0}(X, L^{n})$. We also consider $k^{th}$ product spaces, the infinite product of $k^{th}$ product probability spaces $(H^{0}(X, L^{n})^{k}, \mu^{k}_{n})$ and $$(\mathcal{H}^{k}_{\infty}, \mu_{\infty}^{k})=(\prod_{n=1}^{\infty}{H^{0}(X, L^{n})^{k}}, \prod_{n=1}^{\infty}{\mu^{k}_{n}}),$$ where the $k^{th}$ product measure is $\mu^{k}_{n}=\mu_{n}\times \ldots \times \mu_{n}$. Since $X$ is projective, by a consequence of \mbox{Kodaira's} embedding theorem, for $n$ sufficiently big, the base locus $Bs(H^{0}(X, L^{n}))$ will be empty. Also, by [\cite{BG}, Proposition 6.2 (a probabilistic Bertini's theorem) and Proposition 6.3], with \mbox{probability} one, we have that the zero locus of $\mathcal{G}^{k}_{n}=(s^{1}_{n}, \ldots, s^{k}_{n})$ is a compact complex submanifold of codimension $k$ for some large enough $n$ and that the random current of integration of the zero locus of the mapping $\mathcal{G}^{k}_{n}$, denoted by $[Z_{\mathcal{G}^{k}_{n}}]$, is well-defined via  $$[Z_{\mathcal{G}^{k}_{n}}]:= [Z_{s^{1}_{n}}] \wedge \ldots \wedge [Z_{s^{k}_{n}}].$$

Hence, almost all $k$-tuples of the random holomorphic sections are independent. By repeating the argument in [\cite{BG}, Theorem 3.3], we have \begin{equation}\label{inde}\mathbb{E}[Z_{\mathcal{G}^{k}_{n}}]= \mathbb{E}[Z_{s^{1}_{n}}] \wedge \ldots \wedge \mathbb{E}[Z_{s^{k}_{n}}], \end{equation}which is a positive closed $(k, k)$-current.

\begin{rem}
In the published version of \cite{BG}, the proof of Theorem 3.3 was
presented inductively, in keeping with the structure of some of the
arguments considered in the previous versions of \cite{BG}. However,
induction is not needed for this result. For greater
clarity, let us explain the direct argument here. We use a simple successive application of the Poincar\'e--Lelong formula
and the Fubini--Tonelli argument leading to \cite[(3.19)]{BG}. Applying
this argument successively to each of the independent sections yields
\[
\mathbb{E}_{\sigma_{p,1}\times\cdots\times\sigma_{p,k}}
[Z_{\Sigma_p^k}]
=
\bigwedge_{j=1}^{k}
\mathbb{E}_{\sigma_{p,j}}[Z_{s_p^j}],
\]
which proves the assertion. The four equalities appearing
in the proof of \cite[Theorem 3.3]{BG} should be understood merely as successive iterations of
the above argument, rather than as induction. The same argument of course works in the
present tensor product setting, so (\ref{inde}) follows.

\end{rem}

 Normalization for $k \geq 2$ will be given by $$[\widehat{Z_{\mathcal{G}^{k}_{n}}}]:=\frac{1}{n^{k}}[Z_{\mathcal{G}^{k}_{n}}].$$ By the same reasoning in [\cite{BG}, Theorem 4.2 and Theorem 1.1] and using Proposition \ref{szlims} where relevant, we get the following

\begin{lem}\label{sonex}
According to the above setup, under the conditions of Propositions \ref{szlims}, we have

\begin{equation}\label{expd78}
 \mathbb{E}[\widehat{Z_{\mathcal{G}^{k}_{n}}}] \longrightarrow (dd^c V_{K, q}+\omega)^{k}
\end{equation} in the weak* topology of currents as $n \rightarrow \infty$.
\end{lem}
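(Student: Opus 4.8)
The plan is to reduce the codimension-$k$ statement to the codimension-$1$ case and then push the limit through the wedge product by a Monge--Amp\`ere continuity argument. Since the sections $s^{1}_{n},\ldots,s^{k}_{n}$ are identically distributed with common law $\mu_{n}$, the probabilistic Bertini factorization recorded in (\ref{inde}) gives
\begin{equation*}
\mathbb{E}[\widehat{Z_{\mathcal{G}^{k}_{n}}}]=\frac{1}{n^{k}}\,\mathbb{E}[Z_{s^{1}_{n}}]\wedge\cdots\wedge\mathbb{E}[Z_{s^{k}_{n}}]=\bigwedge_{j=1}^{k}\mathbb{E}[\widehat{Z_{s^{j}_{n}}}],
\end{equation*}
and all $k$ factors coincide. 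Thus it suffices to show that the single repeated factor $T_{n}:=\mathbb{E}[\widehat{Z_{s^{1}_{n}}}]$ converges to $\omega+dd^{c}V_{K,q}$ with a \emph{uniformly convergent, bounded $\omega$-psh potential}, and then to invoke continuity of the complex Monge--Amp\`ere operator to conclude $T_{n}^{\wedge k}\to(\omega+dd^{c}V_{K,q})^{k}$.

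For the codimension-$1$ convergence I would mimic the proof of Lemma \ref{expw}. Writing $s^{1}_{n}=\sum_{l}a_{nl}S_{nl}$ and forming the unit vector $\beta_{n}(x)$ out of the values $S_{nl}(x)$ measured in $h_{n}$, so that $\|\beta_{n}(x)\|=1$ by the very definition (\ref{bts}) of $\Gamma_{n}$, one obtains the pointwise decomposition
\begin{equation*}
\log\|s^{1}_{n}(x)\|_{h_{n}}=\log|\langle a_{n},\beta_{n}(x)\rangle|+\frac{1}{2}\log\Gamma_{n}(x).
\end{equation*}
Combining this with the Poincar\'e--Lelong formula $[Z_{s^{1}_{n}}]=n\omega+dd^{c}\log\|s^{1}_{n}\|_{h_{n}}$, normalizing by $n$, taking expectations and applying Fubini--Tonelli yields
\begin{equation*}
T_{n}=\omega+\frac{1}{2n}\,dd^{c}\log\Gamma_{n}+\frac{1}{n}\,dd^{c}\!\int\log|\langle a_{n},\beta_{n}(\cdot)\rangle|\,d\mu_{n}.
\end{equation*}
The moment condition (\ref{moment}) together with H\"older's inequality bounds the last potential by $D_{n}^{1/\alpha}/n$ \emph{uniformly in $x$} (as $\|\beta_{n}(x)\|=1$), and this tends to $0$ since $D_{n}=o(n^{\alpha})$; meanwhile Proposition \ref{szlims} gives $\frac{1}{2n}\log\Gamma_{n}\to V_{K,q}$ \emph{uniformly} on $X$. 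Hence the $\omega$-psh potential $\phi_{n}$ of $T_{n}=\omega+dd^{c}\phi_{n}$ converges uniformly to the bounded continuous function $V_{K,q}$, and in particular $T_{n}\to\omega+dd^{c}V_{K,q}$ weakly.

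The genuinely new point, and the step I expect to be the main obstacle, is the final one: passing the limit through the $k$-fold wedge. Weak convergence of currents is not preserved under wedging, so one cannot argue factor by factor. The remedy is exactly the uniform convergence of potentials obtained above: since $T_{n}=\omega+dd^{c}\phi_{n}$ with $\phi_{n}$ bounded $\omega$-psh and $\phi_{n}\to V_{K,q}$ uniformly on the compact K\"ahler manifold $X$, the Bedford--Taylor continuity theorem for the complex Monge--Amp\`ere operator along uniformly convergent bounded $\omega$-psh functions guarantees $(\omega+dd^{c}\phi_{n})^{k}\to(\omega+dd^{c}V_{K,q})^{k}$ weakly. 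Applying this to $T_{n}^{\wedge k}=\mathbb{E}[\widehat{Z_{\mathcal{G}^{k}_{n}}}]$ finishes the proof. This is precisely where the upgrade from the $L^{1}_{loc}$-convergence used in codimension one to the \emph{uniform} convergence furnished by Proposition \ref{szlims} becomes indispensable, and it is what makes the unified treatment of all codimensions $1\le k\le m$ possible.
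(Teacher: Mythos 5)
Your proposal is correct, and it isolates exactly the right key point: the upgrade from $L^{1}_{loc}$-convergence (sufficient in codimension one) to the \emph{uniform} convergence of potentials supplied by Proposition \ref{szlims} is what makes the $k$-fold wedge tractable. Your codimension-one decomposition of $T_{n}=\mathbb{E}[\widehat{Z_{s^{1}_{n}}}]$ into $\omega+\frac{1}{2n}dd^{c}\log\Gamma_{n}+\frac{1}{n}dd^{c}\int\log|\langle a_{n},\beta_{n}(\cdot)\rangle|\,d\mu_{n}$, the uniform $O(D_{n}^{1/\alpha}/n)$ bound on the second potential via H\"older and (\ref{moment}), and the use of the factorization (\ref{inde}) all coincide with what the paper does. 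Where you diverge is the final step: you fold both correction terms into a single bounded $\omega$-psh potential $\phi_{n}\to V_{K,q}$ uniformly and invoke the Bedford--Taylor continuity of the Monge--Amp\`ere operator along uniformly convergent bounded quasi-psh potentials, whereas the paper (following \cite[Theorem 4.2]{BG} and \cite[Proposition 3.5]{CLMM}) writes out the telescoping identity $\frac{\Theta^{k}}{n^{k}}-(\omega+dd^{c}V_{K,q})^{k}=\bigl(\frac{\Theta}{n}-(\omega+dd^{c}V_{K,q})\bigr)\wedge\upsilon_{n}$ with $\upsilon_{n}=\sum_{j=0}^{k-1}\frac{\Theta^{j}}{n^{j}}\wedge(\omega+dd^{c}V_{K,q})^{k-1-j}$ and estimates each term explicitly using the moment condition, Chern--Levine--Nirenberg-type volume inequalities, and an anticommutativity convention for positive currents. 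The two routes are close in substance -- the proof of the Bedford--Taylor theorem is itself essentially this telescoping-plus-integration-by-parts argument -- but the paper's version is quantitative (it exhibits the rate through $D_{n}^{1/\alpha}/n$ and the uniform error in $\frac{1}{2n}\log\Gamma_{n}-V_{K,q}$, which is then reused for the variance estimate in Theorem \ref{varg2}), while yours is cleaner at the cost of citing the continuity theorem as a black box and of a small point you should make explicit: that $\phi_{n}$, being the bounded potential of the positive closed current $T_{n}$ in the class of $\omega$, agrees a.e.\ with a bounded $\omega$-psh function, so the continuity theorem applies.
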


\begin{thm}\label{varg2}

With the conditions of Proposition \ref{szlims} and the data given above, for any chosen form $\varphi \in \mathcal{D}^{m-k, m-k}_{\mathbb{R}}(X)$, there exists a constant $C_{\varphi}>0$ depending only on $\varphi$ such that $$\mathrm{Var}{\langle [\widehat{Z_{\mathcal{G}^{k}_{n}}}], \varphi \rangle} \leq C_{\varphi}^{2}\,D_{n}^{\frac{2}{\alpha}} \frac{1}{n^{2}}.$$

\end{thm}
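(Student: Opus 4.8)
The plan is to reduce the codimension-$k$ variance to the codimension-$1$ estimate of Theorem \ref{main}, exploiting the independence of the sections $s^{1}_{n},\ldots,s^{k}_{n}$ built into the product measure $\mu^{k}_{n}$. Write $T_{j}:=[\widehat{Z_{s^{j}_{n}}}]$ and $\overline{T}_{j}:=\mathbb{E}[\widehat{Z_{s^{j}_{n}}}]$, so that $[\widehat{Z_{\mathcal{G}^{k}_{n}}}]=T_{1}\wedge\cdots\wedge T_{k}$ and, by (\ref{inde}), $\mathbb{E}[\widehat{Z_{\mathcal{G}^{k}_{n}}}]=\overline{T}_{1}\wedge\cdots\wedge\overline{T}_{k}$. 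First I would use the telescoping identity
\[
T_{1}\wedge\cdots\wedge T_{k}-\overline{T}_{1}\wedge\cdots\wedge\overline{T}_{k}=\sum_{j=1}^{k}\overline{T}_{1}\wedge\cdots\wedge\overline{T}_{j-1}\wedge(T_{j}-\overline{T}_{j})\wedge T_{j+1}\wedge\cdots\wedge T_{k},
\]
and set $A_{j}:=\langle\overline{T}_{1}\wedge\cdots\wedge\overline{T}_{j-1}\wedge(T_{j}-\overline{T}_{j})\wedge T_{j+1}\wedge\cdots\wedge T_{k},\varphi\rangle$. Because $T_{j}-\overline{T}_{j}$ has mean zero and is independent of the factors $T_{i}$ with $i>j$ (these involve only $s^{i}_{n}$), each $A_{j}$ satisfies $\mathbb{E}A_{j}=0$; hence $\mathrm{Var}\langle[\widehat{Z_{\mathcal{G}^{k}_{n}}}],\varphi\rangle=\mathbb{E}\big(\sum_{j}A_{j}\big)^{2}$, and by Minkowski's inequality it suffices to bound each $\mathbb{E}A_{j}^{2}$.

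Next I would integrate by parts. Since every $\overline{T}_{i}$ and $T_{i}$ is closed and $X$ is boundaryless, writing $T_{j}-\overline{T}_{j}=dd^{c}g_{j}$ with $g_{j}:=\tfrac{1}{n}\log\|s^{j}_{n}\|_{h_{n}}-\mathbb{E}\big[\tfrac{1}{n}\log\|s^{j}_{n}\|_{h_{n}}\big]$ gives
\[
A_{j}=\int_{X}g_{j}\,\Omega_{j},\qquad \Omega_{j}:=dd^{c}\varphi\wedge\overline{T}_{1}\wedge\cdots\wedge\overline{T}_{j-1}\wedge T_{j+1}\wedge\cdots\wedge T_{k},
\]
where $\Omega_{j}$ is an $(m,m)$-current, i.e. a signed measure. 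The key structural point is that $g_{j}$ depends only on $s^{j}_{n}$ while $\Omega_{j}$ depends only on $s^{j+1}_{n},\ldots,s^{k}_{n}$, so $g_{j}$ and $\Omega_{j}$ are independent. Conditioning on $\Omega_{j}$ and using $A_{j}^{2}=\int_{X}\int_{X}g_{j}(x)g_{j}(y)\,d\Omega_{j}(x)\,d\Omega_{j}(y)$ yields $\mathbb{E}\big[A_{j}^{2}\mid\Omega_{j}\big]=\int_{X}\int_{X}\mathbb{E}\big[g_{j}(x)g_{j}(y)\big]\,d\Omega_{j}(x)\,d\Omega_{j}(y)$.

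For the covariance kernel I would reuse the argument of Theorem \ref{main}. Writing $\|s^{j}_{n}(x)\|_{h_{n}}=\sqrt{\Gamma_{n}(x)}\,|\langle a^{j},u_{n}(x)\rangle|$ for the unit evaluation vector $u_{n}(x)\in\mathbb{C}^{d_{n}}$, the deterministic factor $\tfrac{1}{2n}\log\Gamma_{n}$ cancels in $g_{j}$, so $g_{j}(x)=\tfrac{1}{n}\big(\log|\langle a^{j},u_{n}(x)\rangle|-\mathbb{E}\log|\langle a^{j},u_{n}(x)\rangle|\big)$. By the moment condition (\ref{moment}) with $\alpha\geq2$ (so that the $L^{2}$-norm is dominated by the $L^{\alpha}$-norm), one has $\mathbb{E}[g_{j}(x)^{2}]\leq D_{n}^{2/\alpha}/n^{2}$ for every $x$, and Cauchy--Schwarz gives $|\mathbb{E}[g_{j}(x)g_{j}(y)]|\leq D_{n}^{2/\alpha}/n^{2}$ uniformly in $x,y$. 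Consequently $|\mathbb{E}[A_{j}^{2}\mid\Omega_{j}]|\leq (D_{n}^{2/\alpha}/n^{2})\,\|\Omega_{j}\|^{2}$, where $\|\Omega_{j}\|$ is the total mass. Finally, since $dd^{c}\varphi$ is a smooth $(m-k+1,m-k+1)$-form and the factors $\overline{T}_{i},T_{i}$ are positive closed $(1,1)$-currents all cohomologous to $\omega$, the comparison $|dd^{c}\varphi\wedge\Theta|\leq C_{\varphi}\,\omega^{m-k+1}\wedge\Theta$ for positive $\Theta$ yields the purely cohomological bound $\|\Omega_{j}\|\leq C_{\varphi}\int_{X}\omega^{m}$, a deterministic constant independent of $n$. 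Combining, $\mathbb{E}A_{j}^{2}\leq C_{\varphi}^{2}\,D_{n}^{2/\alpha}/n^{2}$, and summing the $k$ terms (absorbing $k$ into $C_{\varphi}$) gives the asserted estimate.

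The main obstacle is not the probabilistic bound, which is an immediate transcription of Theorem \ref{main}, but the justification of the wedge-product calculus in codimension $k$: that the currents $\overline{T}_{1}\wedge\cdots\wedge T_{j+1}\wedge\cdots\wedge T_{k}$ are well-defined, positive, and closed, that the integration by parts is legitimate, and that the masses $\|\Omega_{j}\|$ are controlled cohomologically. These points rest on the probabilistic Bertini theorem and the regularity of the expectation currents $\overline{T}_{i}$, and are handled exactly as in [\cite{BG}, Theorem 4.2], with Proposition \ref{szlims} supplying the uniform control of $\tfrac{1}{2n}\log\Gamma_{n}$ needed for the expectation potentials. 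The careful bookkeeping of which section each factor depends on, so that $g_{j}$ and $\Omega_{j}$ are genuinely independent, is the one place where the ordering in the telescoping identity must be respected.
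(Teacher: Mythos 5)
Your argument is correct in outline and reaches the stated bound, but it takes a genuinely different route from the paper's (admittedly only sketched) proof. The paper proceeds by induction on the codimension $k$: using an equivalent formulation of the variance (conditioning on $s^{1}_{n},\ldots,s^{k-1}_{n}$, legitimate by the independence relation (\ref{inde})), it restricts the current $[Z_{s^{k}_{n}}]$ to the zero set $Z_{\mathcal{G}^{k-1}_{n}}$ and invokes the codimension-one estimate of Theorem \ref{main} there, following \cite[Theorem 3.1]{Shif} and \cite[Theorem 1.1]{BG}. You instead unroll the problem with the telescoping identity $T_{1}\wedge\cdots\wedge T_{k}-\overline{T}_{1}\wedge\cdots\wedge\overline{T}_{k}=\sum_{j}\overline{T}_{1}\wedge\cdots\wedge(T_{j}-\overline{T}_{j})\wedge\cdots\wedge T_{k}$ --- essentially the same factorization the paper uses for the \emph{expectation} in (\ref{sonii}) --- and bound each term directly by combining the uniform covariance-kernel estimate $|\mathbb{E}[g_{j}(x)g_{j}(y)]|\leq D_{n}^{2/\alpha}/n^{2}$ (a pointwise transcription of the H\"older step in Theorem \ref{main}, valid precisely because $\alpha\geq 2$) with a cohomological bound on the total mass of $\Omega_{j}$. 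What your approach buys is a single non-inductive estimate in which the only probabilistic input is the two-point second moment of $\log|\langle a,u\rangle|$, and the ordering of the telescoping sum makes the independence of $g_{j}$ from $\Omega_{j}$ transparent; what the paper's restriction argument buys is that it only ever needs masses of positive currents on submanifolds rather than the total variation of the signed measures $dd^{c}\varphi\wedge\overline{T}_{1}\wedge\cdots\wedge T_{k}$. Both proofs defer the same current-theoretic technicalities (well-definedness, positivity and closedness of the wedge products via the probabilistic Bertini theorem, Stokes, and the volume inequality \cite[(2.2)]{BG}), and you flag these correctly; the one step you should make explicit is that the Fubini interchange in $\mathbb{E}\bigl[A_{j}^{2}\mid\Omega_{j}\bigr]$ is justified by the uniform $L^{2}$ bound on $g_{j}$ together with the finiteness of $\|\Omega_{j}\|$.
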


The proof of Lemma \ref{sonex} is done exactly as in \cite[Theorem 4.2]{BG} whose proof is based on \cite[Proposition 3.5]{CLMM}, with certain modifications. With the same notation in \cite[Theorem 4.2]{BG}, we use first the factorization of difference of two positive closed $(k, k)$-currents $$\frac{\Theta^{k}}{n^{k}}:=\mathbb{E}[\widehat{Z_{\mathcal{G}^{k}_{n}}}]= \big[\omega + \frac{1}{2n}dd^{c}\log{\Gamma_{n}} + \frac{1}{n}dd^{c} (\int_{f_{n} \in \mathcal{P}_{n}}{{\log{|\langle a^{(n)}, \beta^{(n)}(z)\rangle|}}d\mu_{n}(f_{n})}) \big]^{k}$$ and $(\omega + dd^{c}V_{K, q})^{k}$ as follows: For $\varphi \in \mathcal{D}_{\mathbb{R}}^{m-k, m-k}(X)$,
\begin{equation}\label{sonii}
 \langle \frac{\Theta^{k}}{n^{k}} - (\omega + dd^{c}V_{K, q})^{k}, \varphi \rangle = (\frac{1}{2n}dd^{c}\log{\Gamma_{n}} - dd^{c}V_{K, q})) \wedge \upsilon_{n}, \varphi \rangle + \langle \frac{1}{n}dd^{c} (\int_{f_{n} \in \mathcal{P}_{n}}{{\log{|\langle a^{(n)}, \beta^{(n)}(z)\rangle|}}d\mu_{n}(f_{n})})\wedge \upsilon_{n} , \varphi \rangle,
 \end{equation} where $$\upsilon_{n}:= \sum_{j=0}^{k-1} \frac{\Theta_{n}^{j}}{n^{j}} \wedge (\omega + dd^{c}V_{K, q})^{k-1-j}.$$ Then we estimate (\ref{sonii}) by using the uniform convergence of $\frac{1}{2n}\log{\Gamma_{n}}$ to $V_{K, q}$ (Proposition \ref{szlims}), the logaritmic moment condition (\ref{moment}) and volume inequality \cite[(2.2)]{BG}. One key step in the rest of the proof is that $\Theta$ is a positive current unlike the proof of \cite[Proposition 3.5]{CLMM}, which deals with differential forms only, so one must use an anticommutativity convention \cite[(2.1)]{BG}.

 The proof of Theorem \ref{varg2} relies on \cite[Theorem 3.4]{BG}, which is a more general version of \cite[Theorem 1.1]{BG} and is proved also to conclude \cite[Theorem 1.1]{BG} and uses conditional variances for individual codimension $1$ divisior currents.
 
 A subtle point, which was not fully apparent to us at the time of writing the present paper, is that a restriction-based induction of Shiffman is not available in the present non-Gaussian framework without additional assumptions. Indeed, after passing to an intermediate zero set, one is naturally led to induced coefficient measures on the corresponding restricted spaces of sections, and the moment condition need not be preserved in a form compatible with the inductive argument. For this reason, the argument in \cite[Theorem 3.4]{BG} avoids restricted coefficient measures altogether and proceeds instead through successive codimension-one variance estimates.

\vspace{4mm}
By mimicking the proof of \cite[Theorem 4.3]{BG} with Lemma \ref{sonex} and Theorem \ref{varg2}, we derive the following equidistribution result in codimensions $k \geq2$ in the global framework. This theorem improves the results of \cite[Theorem 1.1]{Bay16} according to the probabilistic setting and asymptotically Bernstein-Markov measures used in the current paper. Theorem \ref{co26} also proves that \cite[Theorem 1.1]{Bay16} is true for non-homogeneous projective manifolds, affirmatively answering the question raised in the same paper.

\begin{thm} \label{co26}
Under the assumptions of Proposition \ref{szlims}, if $\sum_{n=1}^{\infty}{\frac{D_{n}^{2/\alpha}}{n^{2}}}< \infty$, then for $\mu^{k}_{\infty}$-almost every sequence $\{\mathcal{G}^{k}_{n}\}$, \begin{equation}\label{equidc}
                                                      [\widehat{Z_{\mathcal{G}^{k}_{n}}}] \longrightarrow (dd^{c}V_{K,q} + \omega)^{k}
                                                    \end{equation}in the weak* topology of currents as $n\rightarrow \infty$.
\end{thm}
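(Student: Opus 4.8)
The plan is to reproduce, at the level of codimension $k$, the second-moment argument of Theorem \ref{equidc1}, now driven by the two inputs already in hand: the expectation convergence of Lemma \ref{sonex} and the variance bound of Theorem \ref{varg2}. Fix a test form $\varphi\in\mathcal{D}^{m-k,m-k}_{\mathbb{R}}(X)$ and, on the probability space $(\mathcal{H}^{k}_{\infty},\mu^{k}_{\infty})$, introduce the nonnegative random variables
\begin{equation*}
X_{n}(\mathbf{s}):=\bigl(\langle[\widehat{Z_{\mathcal{G}^{k}_{n}}}]-\mathbb{E}[\widehat{Z_{\mathcal{G}^{k}_{n}}}],\,\varphi\rangle\bigr)^{2}\geq 0,
\end{equation*}
so that the equivalent characterization of variance gives $\int_{\mathcal{H}^{k}_{\infty}}X_{n}\,d\mu^{k}_{\infty}=\mathrm{Var}\langle[\widehat{Z_{\mathcal{G}^{k}_{n}}}],\varphi\rangle$. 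Theorem \ref{varg2} bounds the right-hand side by $C_{\varphi}^{2}D_{n}^{2/\alpha}n^{-2}$, and the standing hypothesis $\sum_{n}D_{n}^{2/\alpha}n^{-2}<\infty$ then yields $\sum_{n}\int_{\mathcal{H}^{k}_{\infty}}X_{n}\,d\mu^{k}_{\infty}<\infty$.

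Next I would apply Beppo--Levi's theorem to interchange summation and integration, concluding $\int_{\mathcal{H}^{k}_{\infty}}\sum_{n}X_{n}\,d\mu^{k}_{\infty}<\infty$; hence $\sum_{n}X_{n}<\infty$ and in particular $X_{n}\to 0$ for $\mu^{k}_{\infty}$-almost every $\mathbf{s}$. By the definition of $X_{n}$ this is exactly
\begin{equation*}
\langle[\widehat{Z_{\mathcal{G}^{k}_{n}}}],\varphi\rangle-\mathbb{E}\langle[\widehat{Z_{\mathcal{G}^{k}_{n}}}],\varphi\rangle\longrightarrow 0\quad\mu^{k}_{\infty}\text{-a.s.}
\end{equation*}
Combining with Lemma \ref{sonex}, which gives $\mathbb{E}\langle[\widehat{Z_{\mathcal{G}^{k}_{n}}}],\varphi\rangle\to\langle(dd^{c}V_{K,q}+\omega)^{k},\varphi\rangle$, I obtain $\langle[\widehat{Z_{\mathcal{G}^{k}_{n}}}],\varphi\rangle\to\langle(dd^{c}V_{K,q}+\omega)^{k},\varphi\rangle$ almost surely, for this fixed $\varphi$.

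The one genuine subtlety --- the step I expect to demand the most care --- is upgrading this fixed-$\varphi$ statement into almost-sure convergence in the weak* topology, i.e. exhibiting a single full-measure set of samples along which the pairing converges for every test form at once, since the a.s. set produced above depends a priori on $\varphi$. To remove this dependence I would use the separability of $\mathcal{D}^{m-k,m-k}_{\mathbb{R}}(X)$ (here $X$ is compact): fix a countable family $\{\varphi_{i}\}$ dense in the relevant topology, run the above argument for each $\varphi_{i}$, and intersect the countably many full-measure sets to get a single $\mu^{k}_{\infty}$-full-measure set $\mathcal{A}$ on which $\langle[\widehat{Z_{\mathcal{G}^{k}_{n}}}],\varphi_{i}\rangle\to\langle(dd^{c}V_{K,q}+\omega)^{k},\varphi_{i}\rangle$ for all $i$. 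Finally I would invoke a uniform mass bound: the currents $[\widehat{Z_{\mathcal{G}^{k}_{n}}}]$ are positive of bidimension $(m-k,m-k)$ and lie in the fixed cohomology class $\omega^{k}$, so their masses are bounded independently of $n$; this permits a standard approximation argument to pass from the dense family $\{\varphi_{i}\}$ to an arbitrary $\varphi$, giving weak* convergence for every sample in $\mathcal{A}$ and completing the proof.
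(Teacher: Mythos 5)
Your argument is correct and is essentially the paper's own proof: the paper simply says to mimic the second-moment argument of Theorem \ref{equidc1} (via \cite[Theorem 4.3]{BG}) using Lemma \ref{sonex} and Theorem \ref{varg2}, which is exactly the Beppo--Levi/variance-summability scheme you carry out. Your final paragraph on upgrading fixed-$\varphi$ almost-sure convergence to weak* convergence via separability and the uniform mass bound is a point the paper leaves implicit, and it is a welcome (and correct) addition rather than a deviation.
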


If we take $\mu_{n}$ as in Subsection \ref{64} and use the necessary information there, we have the next corollary. This is a more general version of \cite[Theorem 1.1]{Bay16} based on the aforementioned aspects.

\begin{cor}
 Given the hypotheses of Proposition \ref{szlims}, we have, for $\mu^{k}_{\infty}$-almost every sequence $\{\mathcal{G}^{k}_{n}\}$, \begin{equation}\label{equidc}
                                                      [\widehat{Z_{\mathcal{G}^{k}_{n}}}] \longrightarrow (dd^{c}V_{K,q} + \omega)^{k}
                                                    \end{equation}in the weak* topology of currents as $n\rightarrow \infty$.
\end{cor}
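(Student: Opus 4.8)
The plan is to obtain this as an immediate specialization of Theorem \ref{co26}. Since the hypotheses of Proposition \ref{szlims} are being assumed, the sole remaining thing to check before invoking Theorem \ref{co26} is its summability hypothesis $\sum_{n=1}^{\infty}\frac{D_{n}^{2/\alpha}}{n^{2}} < \infty$. First I would recall from Subsection \ref{64} that, when $\mu_{n}$ is induced by i.i.d.\ coefficients drawn from a bounded density obeying the logarithmic tail bound (\ref{logtail}) with $\gamma > 2m$, the cited \cite[Lemma 4.15]{BCM} guarantees the moment condition (\ref{moment}) with the explicit constant (\ref{dnmom}), namely $D_{n} = B\, d_{n}^{\alpha/\gamma}$, valid for any $1 \leq \alpha < \gamma$. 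Here $d_{n} = \dim H^{0}(X, L^{n})$ rather than $\binom{n+m}{m}$, but this substitution is harmless: the dimension estimate (\ref{big1}) supplies $d_{n} \leq B_{2}\, n^{m}$, which is the same polynomial growth rate that drove the computation in the polynomial setting.

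The second step is the elementary estimate already recorded at (\ref{sumi}). Raising $D_{n} = B\, d_{n}^{\alpha/\gamma}$ to the power $2/\alpha$ and inserting $d_{n} \leq B_{2}\, n^{m}$ gives $D_{n}^{2/\alpha} \leq B^{2/\alpha}\, B_{2}^{2/\gamma}\, n^{2m/\gamma}$, so that $\sum_{n}\frac{D_{n}^{2/\alpha}}{n^{2}}$ is dominated, up to a fixed constant, by $\sum_{n} n^{2m/\gamma - 2}$. This series converges precisely because the tail-exponent hypothesis $\gamma > 2m$ forces $2m/\gamma - 2 < -1$. Having established the summability condition, I would then apply Theorem \ref{co26} verbatim to conclude that for $\mu^{k}_{\infty}$-almost every sequence $\{\mathcal{G}^{k}_{n}\}$ one has $[\widehat{Z_{\mathcal{G}^{k}_{n}}}] \longrightarrow (dd^{c}V_{K,q} + \omega)^{k}$ in the weak* topology of currents.

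I do not anticipate a genuine obstacle: the corollary is purely a concretization of Theorem \ref{co26} for a distribution whose tail decay is calibrated so that the abstract summability condition holds automatically. The only point demanding a little attention is the bookkeeping of the constants $B$, $B_{2}$, $\alpha$, $\gamma$, together with the observation that the dimension growth $d_{n} = O(n^{m})$ furnished by (\ref{big1}) in the line-bundle setting plays exactly the role that $\binom{n+m}{m} = O(n^{m})$ played in Subsection \ref{64}. Consequently the verification of $\sum_{n}\frac{D_{n}^{2/\alpha}}{n^{2}} < \infty$ transfers without change, and no codimension-dependent difficulty arises because the summability requirement in Theorem \ref{co26} is already uniform in $k$.
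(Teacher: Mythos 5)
Your proposal is correct and follows essentially the same route as the paper: the paper's (implicit) proof is precisely to take $\mu_{n}$ as in Subsection \ref{64}, where the moment bound $D_{n}=B\,d_{n}^{\alpha/\gamma}$ together with $d_{n}=O(n^{m})$ (supplied in the line-bundle setting by (\ref{big1})) and $\gamma>2m$ yields the summability condition $\sum_{n}D_{n}^{2/\alpha}/n^{2}<\infty$, after which Theorem \ref{co26} applies verbatim. Your explicit verification of the exponent arithmetic matches the computation at (\ref{sumi}).
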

All of the results above in this section are valid in the unweighted case $q=0$ when $K \subset X$ is regular. The statements and proofs are identical to the weighted ones, so we will not include them. As a last note, if $K=X$ and $q=0$, then, from the considerations of \cite[Subsection 9.4]{GZ1}, it follows that $V_{X, 0} \equiv 0$ and $dd^{c} V_{K, q} + \omega$ is equal to $\omega$, hence, Theorem \ref{co26} is a more general form of  \cite[Theorem 1.1]{SZ99}, according to the probabilistic setting considered in this paper.

\vspace{5mm}
\textbf{Acknowledgement:} The author is grateful to the anonymous referee for his/her careful review, suggestions and corrections, which contributed to improving the article’s presentation. The author also sincerely thanks Afrim Bojnik for discussions regarding the subject matter of this work.

\vspace{5mm}


{}

 \end{document}